\newcommand{\N}{\mathbb{N}}
\newcommand{\R}{\mathbb{R}}
\def\sideremark#1{\ifvmode\leavevmode\fi\vadjust{\vbox
to0pt{\vss \hbox to 0pt{\hskip\hsize\hskip1em
\vbox{\hsize2cm\tiny\raggedright\pretolerance10000
\noindent#1\hfill}\hss}\vbox to8pt{\vfil}\vss}}}
\newcommand{\vp}{\varepsilon}
\newtheorem{thm}{Theorem}[section]
\newtheorem{lem}[thm]{Lemma}
\newtheorem{prop}[thm]{Proposition}
\theoremstyle{definition}
\newtheorem{defin}[thm]{Definition}
\newtheorem{ex}[thm]{Example}
\newcommand{\coo}{c_{00}}
\newcommand{\spa}{{\rm span}}
\def\sfrac#1#2{\kern.1em\raise.5ex\hbox{$#1$}
        \kern-.1em/\kern-.05em\lower.25ex\hbox{$#2$}}
\title{Weaving Schauder frames}
\begin{document}

\author{Peter G. Casazza}
\address{Department of Mathematics\\ University of Missouri\\ Columbia, MO 65211}
\email{casazzap@mail.missouri.edu}

\author{Daniel Freeman}
\address{Department of Mathematics and Computer Science\\ Saint Louis University\\ St Louis, MO 63103}
\email{dfreema7@slu.edu}

\author{Richard G. Lynch}
\address{Department of Mathematics\\ University of Missouri\\ Columbia, MO 65211}
\email{rglz82@mail.missouri.edu}

\thanks{The first and third authors were supported by
NSF DMS 1307685; NSF ATD 1321779.  The second author was supported
by grant 353293 from the Simon's foundation.}
\subjclass[2000]{46B20,  42C15}

\begin{abstract}
We extend the concept
of weaving Hilbert space frames to the Banach space setting. Similar
to frames in a Hilbert space, we show that for any two approximate
Schauder frames for a Banach space, every weaving is an approximate
Schauder frame if and only if there is a uniform constant $C\geq 1$
such that every weaving is a $C$-approximate Schauder frame. We also
study weaving Schauder bases, where it is necessary to introduce two
notions of weaving. On one hand, we can ask if two Schauder bases
are woven when considered as Schauder frames with their biorthogonal
functionals, and alternatively, we can ask if each weaving of two
Schauder bases remains a Schauder basis. We will prove that these
two notions coincide when all weavings are unconditional, but otherwise
they can be different. Lastly, we prove two perturbation
theorems for approximate Schauder frames.
\end{abstract}

\maketitle

\section{Introduction}

The concept of \emph{weaving Hilbert space frames} was first introduced in \cite{BCGLL}. Two frames $\{x_j^0\}_{j \in J}$ and $\{x_j^1\}_{j \in J}$ for a Hilbert space $H$ are called \emph{woven} if there exist constants $0 < A \leq B$ so that the \emph{weaving} $\{x_j^0\}_{j \in \sigma} \cup \{x_j^1\}_{j \in \sigma^c}$ is a frame with bounds $A,B$ for every choice of partition $\sigma \subset J$.

Weaving frames is motivated by distributed signal processing. We can think of each $j \in J$ as a sensor or node, and for each one we measure a signal with either $x_j^0$ or $x_j^1$. Can a signal be recovered robustly regardless of how the measurements are taken at each node?  Another way to think of this is if $(x_j^0)_{j\in J}$ and $(x^1_j)_{j\in J}$ are two sensor networks then we are interested in whenever parts of one network can be used to replace parts of the other.  In \cite{BCGLL}, \cite{CL}, and \cite{DV} the sensors are modeled by inner product with a vector in a frame or Riesz basis for a Hilbert space.  In this paper, we extend the concept of weaving frames to the Banach space setting. We will model sensors by evaluation by linear functionals associated with an approximate Schauder frame or Schauder basis.  We will be specifically considering weaving two bases or frames together, but we note that all of our results can be easily extended to weaving any finite number of bases or frames.


We begin by recalling the definition of a frame for a Hilbert space.

\begin{defin}
A set of vectors $(x_{j})_{j\in J}$ in a Hilbert space $H$ is called a {\em frame} for $H$ if there exists positive constants $A$ and $B$ (called the lower and upper {\em frame bounds} respectively) such that
\begin{equation}\label{e:frame}
A\|x\|^2\leq \sum_{j\in J} |\langle x, x_i\rangle|^2 \leq
B\|x\|^2\quad\textrm{for all }x\in H.
\end{equation}
\end{defin}

Given a set of vectors $(x_j)_{j\in J}$ in a Hilbert space $H$, we
define an operator $S:H\rightarrow H$ (called the {\em frame
operator}) by $S(x) :=\sum_{j\in J} \langle x, x_i\rangle x_i$, for all
$x\in H$, assuming that the series converges.  The frame operator
can be used to give a simple characterization of frames. Indeed,
$(x_j)_{j\in J}$ is a frame if and only if the frame operator is
well defined, bounded, and has bounded inverse.  Furthermore,
suppose that $(x_j)_{j\in J}$ is a frame with frame operator $S$,
 optimal lower frame bound $A$, and optimal upper frame bound $B$.  Then
$\|S\|=B$ and $\|S^{-1}\|=A^{-1}$.

 The characterization of a frame in terms of the frame operator is a useful perspective when considering how to
 generalize frame theory to Banach spaces.  Frames have been generalized to
 Banach spaces in multiple ways. One way is to generalize the frame inequality \eqref{e:frame} as in  \cite{G, FG}, but the following definition
 of a Schauder frame and approximate Schauder frame instead generalizes the
 definition of the frame operator.  Schauder frames were first used in
 \cite{CDOSZ} with the goal of creating a procedure to represent
vectors using quantized coefficients and are a generalization of framings of Banach spaces which were introduced in \cite{CHL}. Approximate Schauder frames
were defined in \cite{FOSZ} and were used in the construction of a Schauder frame for $L_p(\R^d)$ with $p>2$ formed by translations of a single
function.

\begin{defin} Let $X$ be a Banach space with dual space $X^*$.
A sequence $(x_i,f_i)_{i=1}^\infty$ in $X\times X^*$ is called an
{\em approximate Schauder frame} for $X$ if the operator
$S:X\rightarrow X$ (called the {\em frame operator}) defined by
$S(x):=\sum_{i=1}^\infty f_i(x)x_i$ is well defined and is bounded
with bounded inverse.  We say that $(x_i,f_i)_{i=1}^\infty$ is a
$C$-approximate Schauder frame if $\|S\|,\|S^{-1}\|\leq C$.  We say
that $(x_i,f_i)_{i=1}^\infty$ is a {\em Schauder frame} if the frame
operator equals the identity operator.
\end{defin}

 An approximate Schauder frame is called {\em unconditional} if the series
$\sum f_{\pi(i)}(x)x_{\pi(i)}$ converges to $S(x)$ for every
permutation $\pi$ of $\N$.  For $C_u\geq 1$, an unconditional
approximate Schauder frame is called \emph{$C_u$-unconditional} if for all $x\in
X$ and all choices of $\vp_i\in\{-1,1\}$, $\|\sum_{i\in\N} \vp_i
f_i(x)x_i\|\leq C_u\|S(x)\|$.  For $C_s\geq 1$, an unconditional
approximate Schauder frame is \emph{$C_s$-suppression unconditional} if for
all $x\in X$ and all $\Gamma\subseteq\N$, $\|\sum_{i\in\Gamma}
f_i(x)x_i\|\leq C_s\|S(x)\|$.   As is the case for unconditional
Schauder bases, we have that an approximate Schauder frame is
unconditional if and only if it suppression unconditional and
$C_s\leq C_u\leq 2C_s$.

Note that if $(x_i,f_i)_{i=1}^\infty$ is an approximate Schauder
frame for a Banach space $X$ with frame operator $S$ then $(x_i,
(S^{-1})^*f_i)_{i=1}^\infty$ is a Schauder frame for $X$.  This is a
generalization of the often used result that if $(x_i)_{i=1}^\infty$
is a frame for a Hilbert space $H$ with frame operator $S$ then
$(S^{-1/2} x_i)_{i=1}^\infty$ is a Parseval frame for $H$.

 Our goal is to extend the notion of weaving frames in
 a Hilbert space to weaving frames and bases for Banach spaces and to prove the analogous results in this setting.
  This is a different generalization of weaving frames than given in \cite{DV}, where they instead consider weaving together infinitely many Hilbert space
 frames.

Since ordering plays a crucial role, we index the weavings with elements from $\{0,1\}^\N$, the set of functions from $\N$ to $\{0,1\}$.  Note that $\{0,1\}^\N$ is a compact Hausdorff space in the product topology. We will repeatedly make use of this fact.

\begin{defin}\cite[Definition 3]{BCGLL}
Let $(x^0_j)_{j=1}^\infty$ and $(x^1_j)_{j=1}^\infty$ be two  frames for a
Hilbert space $H$.   We say that $(x^0_j)_{j=1}^\infty$ and
$(x^1_j)_{j=1}^\infty$ are {\em woven} if there are positive constants
$A$ and $B$ such that for every function
$\sigma\in\{0,1\}^\N$ we have that
$(x^{\sigma(j)}_j)_{j=1}^\infty$ is a frame for $H$ with lower frame
bound $A$ and upper frame bound $B$.
\end{defin}

  We give the following natural generalization of weaving frames  for
Hilbert spaces to weaving approximate Schauder frames for Banach
spaces.

\begin{defin}
Let $(x^0_j,f^0_j)_{j=1}^\infty$ and $(x^1_j,f^1_j)_{j=1}^\infty$ be
two approximate Schauder frames for a Banach space $X$.  A sequence $(x^{\sigma(j)}_j,f^{\sigma(j)}_j)_{j=1}^\infty$  with $\sigma\in\{0,1\}^\N$ is called a {\em weaving} of
$(x^0_j,f^0_j)_{j=1}^\infty$ and $(x^1_j,f^1_j)_{j=1}^\infty$.
Given $C \geq 1$,  we say that $(x^0_j,f^0_j)_{j=1}^\infty$ and
$(x^1_j,f^1_j)_{j=1}^\infty$ are {\em $C$-woven} if every weaving is a $C$-approximate Schauder frame.  We say that $(x^0_j,f^0_j)_{j=1}^\infty$ and
$(x^1_j,f^1_j)_{j=1}^\infty$ are {\em woven} if they are $C$-woven for some $C \geq 1$.
\end{defin}

Note that this is a true generalization of the definition of woven
frames for Hilbert spaces as two frames  $(x^0_j)_{j=1}^\infty$ and
$(x^1_j)_{j=1}^\infty$ of a Hilbert space $H$ are woven if and only if the approximate Schauder frames
$(x^0_j,x^{0}_j)_{j=1}^\infty$ and $(x^1_j,x^{1}_j)_{j=1}^\infty$ are
woven (where we identify a vector $y\in H$ with the linear functional $x\mapsto \langle x,y\rangle$).  

A \emph{Riesz basis} for a Hilbert space $H$ is any sequence of vectors that is image of some orthonormal basis under an invertible operator (this is equivalent to being a semi-normalized unconditional basis for a Hilbert space). In \cite{BCGLL}, it is proven that if $(x^0_j)_{j=1}^\infty$ and $(x^1_j)_{j=1}^\infty$ are two Riesz bases for a Hilbert space $H$ then $(x^{\sigma(j)}_j)_{j=1}^\infty$ is a Riesz basis for $H$ for all $\sigma\in\{0,1\}^\N$ if and only if $(x^{\sigma(j)}_j)_{j=1}^\infty$ is a frame for $H$ for all $\sigma\in\{0,1\}^\N$. Thus, there is no need to distinguish between two Riesz bases having all weavings be Riesz bases and two Riesz bases having all weavings be frames.   However, we will show that these two notions are not always the same for Schauder bases and thus we will require a second definition of weaving in this context.

A sequence $(x_j)_{j=1}^\infty$ in a Banach space $X$ is a {\em
Schauder basis} for $X$ if for every $x\in X$ there exists a unique
sequence of scalars $(a_j)_{j=1}^\infty$ such that
$x=\sum_{j=1}^\infty a_j x_j$, and a basis is called {\em
unconditional} if the series converges in every order for every
$x\in X$.  The {\em biorthogonal functionals} of
$(x_j)_{j=1}^\infty$ is the sequence $(x^*_j)_{j=1}^\infty$ in $X^*$
such that $x^*_j(x)=a_j$ for all $j\in \N$.   A sequence
$(x_j)_{j=1}^\infty$ in a Banach space $X$ is called a {\em basic
sequence} if it is a basis of its closed span $[x_j]_{j=1}^\infty$.
The {\em basis constant} of a basic sequence $(x_j)_{j=1}^\infty$ is
the least constant $C\geq 1$ such that $\|\sum_{i=1}^n a_i x_i\|\leq
C\|\sum_{i=1}^N a_i x_i \|$  for all choices of scalars
$(a_i)_{i=1}^N$ and  all $n\leq N$.

\begin{defin}
Let $(x^0_j)_{j=1}^\infty$ and $(x^1_j)_{j=1}^\infty$ be
two Schauder bases for a Banach space $X$.  A sequence $(x^{\sigma(j)}_j)_{j=1}^\infty$  with $\sigma\in\{0,1\}^\N$ is called a {\em weaving} of
$(x^0_j)_{j=1}^\infty$ and $(x^1_j)_{j=1}^\infty$.
We say that $(x^0_j)_{j=1}^\infty$ and
$(x^1_j)_{j=1}^\infty$ are {\em woven} if every weaving is a
Schauder basis for $X$.
\end{defin}

Let $(x^0_j)_{j=1}^\infty$ and $(x^1_j)_{j=1}^\infty$ be
two  Schauder bases for a Banach space $X$ with biorthogonal functionals  $(x^{0*}_j)_{j=1}^\infty$ and $(x^{1*}_j)_{j=1}^\infty$.  This implies that  $(x^0_j,x^{0*}_j)_{j=1}^\infty$ and $(x^1_j,x^{1*}_j)_{j=1}^\infty$
are both Schauder frames for $X$.  We can now consider whether or not the two bases  $(x^0_j)_{j=1}^\infty$ and $(x^1_j)_{j=1}^\infty$ are woven and whether or not the two Schauder frames
$(x^0_j,x^{0*}_j)_{j=1}^\infty$ and $(x^1_j,x^{1*}_j)_{j=1}^\infty$ are woven.  In Example \ref{E:conditional weaving}, we show that it is possible for two Schauder bases to be woven but for the corresponding Schauder frames to not be woven.  The key idea for why this is possible is that when we weave approximate Schauder frames we are weaving both two sequences of vectors and two sequences of functionals, but if we weave two Schauder bases and obtain a basis  the new corresponding biorthogonal functionals can be completely unrelated to the biorthogonal functionals of the two bases that we started with.  This gives us more flexibility when weaving Schauder bases, but it is actually a problem when we consider our motivation as determining when different sensor networks can be combined.  In that context, the given linear functionals are fundamental components of the sensors and thus we cannot change them based on what weaving we choose to use.  For this reason, it is important for applications for two Schauder bases to be woven both as bases and as approximate Schauder frames.  We prove that this is the case when all weavings are unconditional.  In particular,
 in Theorem \ref{T:unc} we prove a number of equivalent properties, including that if $(x^0_j)_{j=1}^\infty$ and $(x^1_j)_{j=1}^\infty$ are two unconditional Schauder bases then every weaving of $(x^0_j)_{j=1}^\infty$ and $(x^1_j)_{j=1}^\infty$ is an unconditional basis if and
only if every weaving of $(x^0_j,x^{0*}_j)_{j=1}^\infty$ and $(x^1_j,x^{1*}_j)_{j=1}^\infty$
 is an unconditional approximate Schauder frame.   In Theorem \ref{T:unc} we prove as well that  if $(x^0_j)_{j=1}^\infty$ and $(x^1_j)_{j=1}^\infty$ are two unconditional Schauder bases of a Banach space $X$ then every weaving of $(x^0_j)_{j=1}^\infty$ and $(x^1_j)_{j=1}^\infty$ is an unconditional basis of $X$ if and
only if every weaving is an unconditional basic sequence.   Surprisingly, we show that this is not always the case if some weavings are conditional, and in Example \ref{Ex:subspace} we give two unconditional Schauder bases for $\ell_1$ such that every weaving is a basic sequence, but there exists a weaving which is not a Schauder basis for all of $\ell_1$.

Lastly, in Section \ref{S:5} we prove two perturbation theorems for approximate Schauder frames and prove that these result in woven approximate Schauder frames.  For a thorough approach to the basics of frame theory, see \cite{CK, CL2, C}.  For background on Banach space theory see \cite{FHHMPZ} and \cite{LT}.

\section{Uniform constants for woven approximate Schauder
frames}\label{S:uniform}

One of the major results in \cite{BCGLL} is a proof that
if $(x^0_j)_{j=1}^\infty$ and $(x^1_j)_{j=1}^\infty$ are two  frames for a
Hilbert space $H$, then every weaving is a frame for $H$ if and only if there is
a uniform constant $C\geq 1$ such that $(x^0_j)_{j=1}^\infty$ and $(x^1_j)_{j=1}^\infty$ are $C$-woven.  We shall prove in Theorem \ref{T:weaklyWeaving} that this same uniformity result holds
more generally for approximate Schauder frames.

 Let
$(x^0_j,f^0_j)_{j=1}^\infty$ and $(x^1_j,f^1_j)_{j=1}^\infty$ be two
 approximate Schauder frames of $X$ such that every weaving is an approximate Schauder frame.
For
$I\subseteq\N$ and $\sigma\in \{0,1\}^\N$, we define a linear operator
$P_{\sigma,I}:X\rightarrow X$ by $P_{\sigma,I}(x)=\sum_{j\in
I}f^{\sigma(j)}_j(x)x_j^{\sigma(j)}$. Note that this series will
converge for every interval $I\subseteq\N$, but that the series may
not converge if the approximate Schauder frame is conditional and
$I$ is not an interval.  In this notation, we have that $P_{\sigma,\N}$ is the frame operator of
the weaving $(x^{\sigma(j)}_j,f^{\sigma(j)}_j)_{j=1}^\infty$.  Thus, to prove that two approximate Schauder frames
$(x^0_j,f^0_j)_{j=1}^\infty$ and $(x^1_j,f^1_j)_{j=1}^\infty$ are $C$-woven, we need to prove that
$\|P_{\sigma,\N}\|\leq C$ and $\|P_{\sigma,\N}^{-1}\|\leq C$ for all $\sigma\in\{0,1\}^\N$.

\begin{lem}\label{L:tail}
Let $(x^0_j,f^0_j)_{j=1}^\infty$ and $(x^1_j,f^1_j)_{j=1}^\infty$ be
two approximate Schauder frames for $X$ such that every weaving is an approximate Schauder frame for $X$. If $x\in
X$ and $\vp>0$, then there exists $N\in\N$ such that
$\|P_{\sigma,[m,n]}x\|<\vp$ for all $\sigma\in \{0,1\}^\N$ and $n\geq
m\geq N$.
\end{lem}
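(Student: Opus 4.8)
The plan is to argue by contradiction, exploiting the fact that the operators $P_{\sigma,I}$ are \emph{local}: since $P_{\sigma,I}(x)=\sum_{j\in I}f^{\sigma(j)}_j(x)x_j^{\sigma(j)}$, its value depends only on the restriction of $\sigma$ to $I$. Suppose the conclusion fails for some fixed $x\in X$ and $\vp>0$. Negating the statement, for every $N\in\N$ there exist some $\sigma\in\{0,1\}^\N$ and integers $n\geq m\geq N$ with $\|P_{\sigma,[m,n]}x\|\geq\vp$. The goal is to assemble all of this ``bad'' behavior, witnessed by possibly different functions $\sigma$, into a single function $\sigma$ for which the associated weaving fails to have a convergent frame series, contradicting the hypothesis.

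First I would extract pairwise disjoint intervals tending to infinity. Applying the negated statement with $N=1$ yields an interval $[m_1,n_1]$ and a function $\sigma_1$ with $\|P_{\sigma_1,[m_1,n_1]}x\|\geq\vp$. Having chosen $[m_k,n_k]$ and $\sigma_k$, I apply the negated statement again with $N=n_k+1$ to obtain $[m_{k+1},n_{k+1}]$ with $n_k<m_{k+1}$ and a function $\sigma_{k+1}$ with $\|P_{\sigma_{k+1},[m_{k+1},n_{k+1}]}x\|\geq\vp$. This produces a sequence of disjoint intervals with $m_k,n_k\to\infty$. I then glue the witnesses into one function $\sigma\in\{0,1\}^\N$ by setting $\sigma(j)=\sigma_k(j)$ whenever $j\in[m_k,n_k]$ for some $k$, and $\sigma(j)=0$ otherwise; this is well defined precisely because the intervals are disjoint. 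By locality, $P_{\sigma,[m_k,n_k]}x=P_{\sigma_k,[m_k,n_k]}x$, so $\|P_{\sigma,[m_k,n_k]}x\|\geq\vp$ for every $k$.

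To finish, I would invoke the hypothesis that every weaving is an approximate Schauder frame. In particular the weaving $(x^{\sigma(j)}_j,f^{\sigma(j)}_j)_{j=1}^\infty$ associated to the glued $\sigma$ has a well-defined frame operator, so the series $\sum_j f^{\sigma(j)}_j(x)x^{\sigma(j)}_j$ converges; writing $S_n$ for its $n$th partial sum, the sequence $(S_n)$ is Cauchy. Since $P_{\sigma,[m_k,n_k]}x=S_{n_k}-S_{m_k-1}$ and $m_k,n_k\to\infty$, the Cauchy criterion forces $\|P_{\sigma,[m_k,n_k]}x\|\to 0$, contradicting the uniform lower bound $\vp$. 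This contradiction establishes the lemma.

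The main obstacle is securing uniformity over the uncountable index family $\{0,1\}^\N$ from the mere pointwise convergence guaranteed for each individual weaving. The locality of $P_{\sigma,I}$ is exactly what resolves this: it lets the gluing construction reduce a uniform tail estimate to a single-$\sigma$ convergence statement, so the uniformity is obtained by a diagonal construction rather than a direct compactness argument.
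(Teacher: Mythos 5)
Your proof is correct and is essentially identical to the paper's: both argue by contradiction, inductively extract disjoint intervals with witnesses $\sigma_k$, glue them into a single $\sigma\in\{0,1\}^\N$ using locality of $P_{\sigma,I}$, and derive a contradiction from the Cauchy criterion applied to the convergent frame series of the glued weaving. The only cosmetic difference is that you spell out the Cauchy-criterion step, which the paper leaves implicit.
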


\begin{proof}
Let $x \in X$ and $\vp > 0$. Assume the contrary that for all $N\in\N$, we may
choose $\sigma\in\{0,1\}^\N$ and $n\geq m\geq N$ such that
$\|P_{\sigma,[m,n]}x\|\geq\vp$.  We may then inductively choose
$(\sigma_j)_{j=1}^\infty$ and $m_1\leq n_1<m_2\leq n_2<...$ such
that $\|P_{\sigma_j,[m_j,n_j]}x\|\geq\vp$.  As,
$([m_j,n_j])_{j=1}^\infty$ is a sequence of disjoint intervals, we
may choose $\sigma\in \{0,1\}^\N$ such that
$\sigma|_{[m_j,n_j]}=\sigma_j|_{[m_j,n_j]}$ for all $j\in\N$.  Thus,
we have that $\|P_{\sigma,[m_j,n_j]}x\|\geq\vp$ for all $j\in\N$. Since we assumed that $(x^{\sigma(j)}_j,f^{\sigma(j)}_j)$ is an approximate Schauder
frame, this contradicts the fact that $P_{\sigma,\N}x$ converges.
\end{proof}

\begin{lem}\label{L:upper}
Let $(x^0_j,f^0_j)_{j=1}^\infty$ and $(x^1_j,f^1_j)_{j=1}^\infty$ be
two approximate Schauder frames for $X$ such that every weaving is an approximate Schauder frame for $X$. There exists
a constant $D>0$ such that $\|P_{\sigma,[m,n]}\|\leq D$ for all
$\sigma\in\{0,1\}^\N$ and $m,n\in\N$.
\end{lem}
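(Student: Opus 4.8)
The plan is to derive the uniform norm bound from the pointwise estimate supplied by Lemma \ref{L:tail}, via the Uniform Boundedness Principle. Each operator $P_{\sigma,[m,n]}$ is a finite sum of rank-one maps and so is certainly bounded; hence it suffices to check that for every fixed $x\in X$ the quantity $\sup\{\|P_{\sigma,[m,n]}x\|:\sigma\in\{0,1\}^\N,\ m\leq n\}$ is finite. Banach--Steinhaus, applied to the family $\{P_{\sigma,[m,n]}\}$ indexed by all triples $(\sigma,m,n)$, will then produce a single constant $D$ dominating all the operator norms.

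To establish the pointwise bound, I would fix $x\in X$ and invoke Lemma \ref{L:tail} with $\vp=1$ to obtain $N\in\N$ such that $\|P_{\sigma,[m,n]}x\|<1$ whenever $n\geq m\geq N$, uniformly in $\sigma$. For an arbitrary interval $[m,n]$ I split it as a ``head'' $[m,n]\cap[1,N-1]$ and a ``tail'' $[m,n]\cap[N,\infty)$, and estimate the two pieces separately. The tail piece, when nonempty, has the form $P_{\sigma,[m',n]}x$ with $m'\geq N$, so its norm is below $1$ by the choice of $N$. The head piece is supported on a subinterval of the fixed finite set $\{1,\dots,N-1\}$, and there each summand $f^{\sigma(j)}_j(x)x^{\sigma(j)}_j$ takes one of only two values as $\sigma(j)$ ranges over $\{0,1\}$; its norm is therefore at most $M_x:=\sum_{j=1}^{N-1}\max\bigl(\|f^0_j(x)x^0_j\|,\|f^1_j(x)x^1_j\|\bigr)$, a constant depending on $x$ but not on $\sigma$, $m$, or $n$. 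Combining the two estimates yields $\|P_{\sigma,[m,n]}x\|\leq M_x+1$ for all $\sigma$, $m$, $n$, which is exactly the required pointwise bound.

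The only subtlety is that one cannot simply bound the head by uniform convergence of the series, since the weaving may be conditional; this is precisely the reason Lemma \ref{L:tail} was needed for the tail. On the finite head, however, no convergence issue arises because there are only finitely many indices and each contributes one of two fixed vectors, so the estimate there is an elementary finite sum uniform in $\sigma$. With the pointwise boundedness secured, the Uniform Boundedness Principle delivers the constant $D>0$ with $\|P_{\sigma,[m,n]}\|\leq D$ for all $\sigma\in\{0,1\}^\N$ and all $m,n\in\N$, completing the argument.
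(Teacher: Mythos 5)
Your proof is correct and takes essentially the same approach as the paper: a pointwise-in-$x$ bound obtained by splitting each interval at the $N$ supplied by Lemma \ref{L:tail} (tail uniformly small, head controlled because only finitely many indices with two choices each are involved), followed by the Uniform Boundedness Principle. The only cosmetic difference is that you split $[m,n]$ directly into head and tail pieces, whereas the paper first bounds the infinite tails $P_{\sigma,[n,\infty)}x$ and then writes $P_{\sigma,[m,n]}x = P_{\sigma,[m,\infty)}x - P_{\sigma,[n+1,\infty)}x$; both routes rest on the same estimates.
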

\begin{proof}
Let $x\in X$.  By Lemma \ref{L:tail} there exists $N\in\N$ such that
$\|P_{\sigma,[n,\infty)}x\|<1$ for all $\sigma\in\{0,1\}^\N$ and all
$n\geq N$. We have that $$\sup_{\sigma\in\{0,1\}^\N,n\leq
N}\|P_{\sigma,[n,\infty)}x\|\leq \sup_{\sigma\in\{0,1\}^\N,n\leq
N}\|P_{\sigma,[n,N)}x\|+ \sup_{\sigma\in\{0,1\}^\N}\|P_{\sigma,[N,\infty)}x\|.$$
Thus, we have
$$\sup_{\sigma\in\{0,1\}^\N,n\leq
N}\|P_{\sigma,[n,\infty)}x\|<\infty$$ 
as
$\sup_{\sigma}\|P_{\sigma,[N,\infty)}x\|\leq 1$ and the set
$\{P_{\sigma,[n,N)}x\}_{\sigma\in \{0,1\}^\N,n\leq N}$ is finite.
Hence, there exists a constant $K>0$ such that
$\|P_{\sigma,[n,\infty)}x\|\leq K$ for all $\sigma\in\{0,1\}^\N$ and
all $n\in\N$. Therefore, for all $m,n\in\N$ with $m\leq n$ we have
that
$$\|P_{\sigma,[m,n]}x\|=\|P_{\sigma,[m,\infty)}x-P_{\sigma,[n+1,\infty)}x\|\leq 2K.$$
Hence, $\{P_{\sigma,[m,n]}\}_{\sigma\in\{0,1\}^\N,m\leq n}$ is a set of bounded
operators such that for all $x\in X$, $\{P_{\sigma,[m,n]}x\}$ is
uniformly bounded in norm.  By the uniform boundedness
principle, the set of operators $\{P_{\sigma,[m,n]}\}$ is uniformly
bounded in norm.
\end{proof}

\begin{lem}\label{L:lower}
Let $(x^0_j,f^0_j)_{j=1}^\infty$ and $(x^1_j,f^1_j)_{j=1}^\infty$ be
two  approximate Schauder frames of $X$ such that every weaving is an approximate Schauder frame of $X$. For all $x\in
X$ there exists $\delta>0$ such that for all $\sigma\in\{0,1\}^\N$ we have
that $\|P_{\sigma,\N}x\|\geq\delta\|x\|$
\end{lem}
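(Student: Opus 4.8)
The plan is to argue by contradiction, exploiting the compactness of $\{0,1\}^\N$ together with the uniform tail estimate already established in Lemma \ref{L:tail}. Fix $x\in X$; we may assume $x\neq 0$, since for $x=0$ any $\delta$ works. Suppose no such $\delta$ exists. Then $\inf_{\sigma\in\{0,1\}^\N}\|P_{\sigma,\N}x\|=0$, so there is a sequence $(\sigma_k)_{k=1}^\infty$ in $\{0,1\}^\N$ with $\|P_{\sigma_k,\N}x\|\to 0$. Because $\{0,1\}^\N$ is compact in the product topology, after passing to a subsequence we may assume $\sigma_k\to\sigma$ for some $\sigma\in\{0,1\}^\N$. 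The goal is to show that $P_{\sigma_k,\N}x\to P_{\sigma,\N}x$. Granting this, we get $P_{\sigma,\N}x=0$; but $(x^{\sigma(j)}_j,f^{\sigma(j)}_j)_{j=1}^\infty$ is by hypothesis an approximate Schauder frame, so its frame operator $P_{\sigma,\N}$ has bounded inverse and hence $P_{\sigma,\N}x=0$ forces $x=0$, a contradiction. The desired bound for this particular $x$ is then $\delta=\inf_\sigma\|P_{\sigma,\N}x\|/\|x\|>0$.

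The heart of the matter --- and the step I expect to be the main obstacle --- is to deduce the norm convergence $P_{\sigma_k,\N}x\to P_{\sigma,\N}x$ from mere convergence in the product topology. Product-topology convergence only guarantees that $\sigma_k$ agrees with $\sigma$ on longer and longer initial segments and says nothing about the infinite tails of the two series $\sum_j f^{\sigma_k(j)}_j(x)x^{\sigma_k(j)}_j$ and $\sum_j f^{\sigma(j)}_j(x)x^{\sigma(j)}_j$, where a naive term-by-term comparison breaks down. This is exactly where Lemma \ref{L:tail} is indispensable, as it provides tail control that is uniform over all weaving patterns.

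Concretely, given $\vp>0$, Lemma \ref{L:tail} supplies an $N\in\N$, depending only on $x$ and $\vp$, with $\|P_{\tau,[m,n]}x\|<\vp$ for every $\tau\in\{0,1\}^\N$ and all $n\geq m\geq N$; letting $n\to\infty$ (the tails converge since each weaving is an approximate Schauder frame) gives $\|P_{\tau,[N,\infty)}x\|\leq\vp$ for every $\tau$. In particular both $\|P_{\sigma_k,[N,\infty)}x\|\leq\vp$ and $\|P_{\sigma,[N,\infty)}x\|\leq\vp$. On the other hand, the finite head $P_{\tau,[1,N)}x=\sum_{j=1}^{N-1}f^{\tau(j)}_j(x)x^{\tau(j)}_j$ depends only on $\tau|_{[1,N)}$, so once $k$ is large enough that $\sigma_k|_{[1,N)}=\sigma|_{[1,N)}$ we have $P_{\sigma_k,[1,N)}x=P_{\sigma,[1,N)}x$ exactly. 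Splitting each operator into head plus tail then yields $\|P_{\sigma_k,\N}x-P_{\sigma,\N}x\|\leq 2\vp$ for all sufficiently large $k$, and since $\vp>0$ was arbitrary this establishes the required convergence and completes the contradiction.
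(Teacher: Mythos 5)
Your proof is correct and takes essentially the same approach as the paper's: both argue by contradiction, use compactness of $\{0,1\}^\N$ to extract a limit $\sigma$ of the bad sequence $(\sigma_k)$, invoke Lemma \ref{L:tail} for tail control uniform over all weavings, use product-topology convergence to match the finite heads, and conclude $P_{\sigma,\N}x=0$, contradicting invertibility of the frame operator of the weaving $\sigma$. The only difference is organizational: you isolate the convergence $P_{\sigma_k,\N}x\to P_{\sigma,\N}x$ as a continuity statement, while the paper interleaves the same head-plus-tail estimate directly into the subsequence selection.
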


\begin{proof}

Let $x\in X$ such that $x\neq0$. By Lemma \ref{L:tail}, for all
$n\in\N$ there exists $M_n\in\N$ such that
$\|P_{\gamma,[M_n,\infty)}x\|<\frac{1}{n}$ for all
$\gamma\in\{0,1\}^\N$. Assume for the sake of contradiction that
for all $n\in\N$ there exists $\sigma_n\in\{0,1\}^\N$ such that
$\|P_{\sigma_n,\N}x\|<1/n$.  After passing to a subsequence, we may
assume that $(\sigma_n)_{n=1}^\infty$ converges to some
$\sigma\in\{0,1\}^\N$ and $\sigma_n|_{[1,M_n)}=\sigma|_{[1,M_n)}$
for all $n\in\N$.  For each $n\in\N$, we have the following
upperbound on the norm of $P_{\sigma,\N}x$.
\begin{align*}
\|P_{\sigma,\N}x\|&=\|P_{\sigma_n,\N}x-P_{\sigma_n,[M_n,\infty)}x+P_{\sigma,[M_n,\infty)}x\|\\
&\leq \|P_{\sigma_n,\N}x\|+\|P_{\sigma_n,[M_n,\infty)}x\|+\|P_{\sigma,[M_n,\infty)}x\|\\
&<1/n+1/n+1/n
\end{align*}
Thus, $\|P_{\sigma,\N}x\|<3/n$ for all $n\in\N$ and hence $P_{\sigma,\N}x=0$.  This contradicts that $(x^{\sigma(j)}_j,f^{\sigma(j)}_j)_{j=1}^\infty$ is an approximate Schauder frame.
\end{proof}

\begin{thm}\label{T:weaklyWeaving}
Given two approximate Schauder frames for a Banach space $X$, they
are woven if and only if every weaving is an approximate Schauder frame.  That is, if every weaving is an approximate Schauder frame then there is a uniform constant $C \geq 1$ such that every weaving is a $C$-approximate Schauder frame.  Furthermore, if every weaving is an unconditional approximate Schauder frame then there exists a constant $D\geq1$ such that every weaving is a $D$-unconditional approximate Schauder frame.
\end{thm}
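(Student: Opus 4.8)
The plan is to prove the two implications of the equivalence separately. The direction ``woven $\Rightarrow$ every weaving is an approximate Schauder frame'' is immediate: if the two frames are $C$-woven, then by definition every weaving is a $C$-approximate Schauder frame, hence in particular an approximate Schauder frame. The content is in the converse, where I must manufacture a single constant $C$ with $\|P_{\sigma,\N}\|\le C$ and $\|P_{\sigma,\N}^{-1}\|\le C$ for \emph{every} $\sigma\in\{0,1\}^\N$. I would obtain the uniform upper bound cheaply from Lemma \ref{L:upper}: that lemma produces $D$ with $\|P_{\sigma,[m,n]}\|\le D$ for all $\sigma$ and all $m\le n$, so fixing $\sigma$ and letting $n\to\infty$ in $\|P_{\sigma,[1,n]}x\|\le D\|x\|$, together with $P_{\sigma,[1,n]}x\to P_{\sigma,\N}x$ (valid since the weaving is an approximate Schauder frame), gives $\|P_{\sigma,\N}\|\le D$ for all $\sigma$.

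The real work is a uniform lower bound, i.e. a constant $c>0$ with $\|P_{\sigma,\N}x\|\ge c\|x\|$ for all $\sigma$ and all $x$; then $C=\max(D,c^{-1})$ finishes the proof. Lemma \ref{L:lower} supplies only the pointwise bound $\delta(x)$, and the task is to upgrade it to a bound uniform over the unit sphere. I would argue by contradiction: if no such $c$ exists, choose $\sigma_k\in\{0,1\}^\N$ and unit vectors $z_k$ with $\|P_{\sigma_k,\N}z_k\|\to0$. For each $k$, Lemma \ref{L:tail} gives $N_k$ with $\|P_{\gamma,[N_k,\infty)}z_k\|$ small for every $\gamma$, so the head $\|P_{\sigma_k,[1,N_k)}z_k\|$ is small as well. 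Passing to a subsequence, I may assume $\sigma_k\to\sigma$ in the compact space $\{0,1\}^\N$ and, by a diagonal argument over the countable family of functionals, that $f^0_i(z_k)$ and $f^1_i(z_k)$ converge for every $i$. The aim is to build from these data a single weaving and a sequence of unit vectors on which its frame operator acts with norm tending to $0$, contradicting invertibility. When the limiting coefficients all vanish, the heads $P_{\sigma,[1,p)}z_k$ tend to $0$ for each fixed window $[1,p)$, and a gliding-hump construction works: I place the witnesses on disjoint index-blocks $[p_{k-1},p_k)$, let $\sigma$ agree with $\sigma_k$ on the $k$-th block, choose $p_k$ large (via Lemma \ref{L:tail}) to kill tails beyond $p_k$, and use the vanishing coefficients to kill both heads; the resulting $\sigma$ is a genuine weaving whose operator is not bounded below.

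The hard part will be the case in which the limiting coefficients do \emph{not} all vanish. Because the unit sphere of $X$ is not compact, there is no limiting witness vector to extract, and the pointwise tail estimate of Lemma \ref{L:tail} cannot be made uniform over the varying $z_k$, so the ``energy'' of the witnesses may refuse to escape the low-frequency coordinates. My plan is to peel off the contribution of the (now convergent) low-frequency coefficients, leaving a remainder whose coefficients vanish, to which the previous paragraph applies. The low-frequency part is controlled using that $\sigma\mapsto P_{\sigma,\N}w$ is continuous in the strong operator topology for each fixed $w$ (a direct consequence of the uniform tail in Lemma \ref{L:tail}): were this part to survive in the limit as a nonzero vector, it would be annihilated by the invertible operator $P_{\sigma,\N}$ and thus be zero, which is the reduction to the vanishing case. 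Making this separation of the escaping part of the witnesses from a norm-convergent part fully rigorous, while keeping both pieces unit-normalized and bad, is the technical crux of the theorem.

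For the final, unconditional statement I would run the same two-lemma scheme one level up. Assuming every weaving is an unconditional approximate Schauder frame, I consider the signed operators $R_{\sigma,\theta}x=\sum_i\theta_i f^{\sigma(i)}_i(x)x^{\sigma(i)}_i$ indexed by the compact parameter space $\{0,1\}^\N\times\{-1,1\}^\N$. A verbatim repetition of Lemma \ref{L:tail}, whose contradiction step now invokes convergence of the \emph{signed} series (available precisely because each weaving is unconditional), followed by the uniform boundedness argument of Lemma \ref{L:upper}, yields a constant $D'$ with $\|R_{\sigma,\theta}\|\le D'$ for every $\sigma$ and every sign sequence $\theta$. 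Combining this with the uniform lower frame bound $\|P_{\sigma,\N}x\|\ge c\|x\|$ obtained above gives $\|R_{\sigma,\theta}x\|\le D'\|x\|\le (D'/c)\,\|P_{\sigma,\N}x\|$, so every weaving is $D$-unconditional with $D=D'/c$.
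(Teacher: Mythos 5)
Your treatment of the easy direction, of the uniform upper bound, and of the ``vanishing coefficients'' case of the lower bound (the gliding hump across disjoint blocks, with heads killed by coefficient decay, the middle block controlled by $\|P_{\sigma_k,\N}z_k\|\to 0$, and tails killed by Lemma \ref{L:tail}) is correct, and your derivation of the unconditional statement from the first statement via the signed operators $R_{\sigma,\theta}$ plus the uniform boundedness principle would be a valid, and arguably cleaner, alternative to the paper's concatenation argument. But the theorem is not proved: its entire content is the uniform bound on $\|P_{\sigma,\N}^{-1}\|$, and your Case B --- witnesses $z_k$ whose frame coefficients do not tend to $0$ --- is left open, as you yourself acknowledge. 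The repair you sketch cannot work as stated. In a general Banach space the unit ball has no weak compactness, so the limits $a_i=\lim_k f_i^{\sigma(i)}(z_k)$ are not the coefficients of any vector (the formal series $\sum_i a_i x_i^{\sigma(i)}$ need not converge), there is no meaningful decomposition of $z_k$ into a ``convergent low-frequency part'' plus an ``escaping part,'' and your argument that a surviving limit $w$ would be annihilated by $P_{\sigma,\N}$ is circular: it presupposes $P_{\sigma_k,\N}(z_k-w)\to 0$, which is exactly what the reduction to the vanishing case is supposed to produce. (In the Hilbert-space setting of \cite{BCGLL} this step is rescued by positivity of the partial frame operators together with weak compactness of the ball; neither tool exists here, and the paper's own examples live in $\ell_1$ and $c_0$.)

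For comparison, the paper never attempts a bound uniform over the unit sphere. It fixes $y\in X$, invokes Lemma \ref{L:lower} to assert $\|y\|=\|P_{\sigma,\N}(P_{\sigma,\N}^{-1}y)\|\geq\delta\|P_{\sigma,\N}^{-1}y\|$ for all $\sigma$, concludes that the family of inverses $\{P_{\sigma,\N}^{-1}\}_{\sigma}$ is pointwise bounded, and then lets the uniform boundedness principle manufacture the uniform constant; the uniformity you tried to build by hand is delivered by Baire category, and the unconditional statement is then obtained by a separate concatenation argument in which the problematic ``head'' term is only needed to be bounded (by the suppression constant of the limiting weaving), not small, because the quantity being blown up is an operator norm. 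You should be aware, however, that the difficulty you isolated does not simply dissolve along this route: Lemma \ref{L:lower} produces a $\delta$ depending on the vector, whereas the vectors $P_{\sigma,\N}^{-1}y$ to which the paper applies it vary with $\sigma$, so the paper is invoking exactly the strengthened, $\sigma$-uniform form of the lemma that you found yourself unable to establish. To complete a proof along the paper's lines, the statement you must actually prove is the pointwise one --- for each fixed $y$, $\sup_{\sigma}\|P_{\sigma,\N}^{-1}y\|<\infty$ --- after which the uniform boundedness principle finishes; as your proposal stands, neither this nor the sphere-uniform bound is established.
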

\begin{proof}
Let $(x^0_j,f^0_j)_{j=1}^\infty$ and $(x^1_j,f^1_j)_{j=1}^\infty$ be
two approximate Schauder frames of $X$ such that every weaving is an approximate Schauder frame. Let $y\in X$.
By Lemma \ref{L:lower}, there exists $\delta>0$ such that
$\|y\|=\|P_{\sigma,\N}(P_{\sigma,\N}^{-1}y)\|\geq
\delta\|P_{\sigma,\N}^{-1}y\|$ for all $\sigma\in\{0,1\}^\N$.  Thus,
$\|P_{\sigma,\N}^{-1}y\|\leq (1/\delta)\|y\|$
 for all $\sigma\in\{0,1\}^\N$.  We have that $\{P^{-1}_{\sigma,\N}\}_{\sigma\in\{0,1\}^\N}$ is a collection of bounded operators such that $\sup_{\sigma\in\{0,1\}^\N}\|P^{-1}_{\sigma,\N}x\|<\infty$ for all $x\in X$.  Hence, $\sup_{\sigma\in\{0,1\}^\N}\|P^{-1}_{\sigma,\N}\|<\infty$ by the uniform
 boundedness principle.  Also,
 $\sup_{\sigma\in\{0,1\}^\N}\|P_{\sigma,\N}\|<\infty$ by Lemma
 \ref{L:upper}.  Thus, there is a uniform constant $C\geq 1$ such that $\|P_{\sigma,\N}\|\leq C$ and $\|P_{\sigma,\N}^{-1}\|\leq C$ for all $\sigma\in\{0,1\}^\N$ and hence
 $(x^0_j,f^0_j)_{j=1}^\infty$ and $(x^1_j,f^1_j)_{j=1}^\infty$ are $C$-woven approximate Schauder frames of
 $X$.

We now assume that every weaving of $(x^0_j,f^0_j)_{j=1}^\infty$ and $(x^1_j,f^1_j)_{j=1}^\infty$ is an unconditional approximate Schauder frame.
We have that there exists $C\geq 1$ such that every weaving of $(x^0_j,f^0_j)_{j=1}^\infty$ and $(x^1_j,f^1_j)_{j=1}^\infty$ is a $C$-approximate Schauder frame.
For the sake of contradiction we assume that for all $n\in\N$, there
exists $\sigma_n\in\{0,1\}^\N$ such that the approximate Schauder $(x^{\sigma_n(j)}_j,f^{\sigma_n(j)}_j)_{j=1}^\infty$ is not $n$-suppression unconditional.    After passing
to a subsequence, we may assume that there exists
$\sigma\in\{0,1\}^\N$ such that $\sigma_n\rightarrow\sigma$ in the
product topology of $\{0,1\}^\N$ and that $\sigma_n|_{[1,n]}=\sigma|_{[1,n]}$ for all $n\in\N$.
We choose a subsequence $(m_n)_{n=1}^\infty$ of $\N$ such that
there exists a finite set $\Gamma_n\subseteq[1,m_n]$ and $x_n\in X$ with $\|x_n\|=1$ so that $\|P_{\sigma_n,\Gamma_n}x_n\|\geq  n\|P_{\sigma_n,\N}x_n\|\geq nC^{-1}$.

We inductively define a sequence of natural numbers $(M_n)_{n=1}^\infty$ by $M_1=m_1$ and for $n\in\N$ we let $M_{n+1}=m_{M_n}$.
Let $\gamma\in\{0,1\}^\N$ be such that
$\gamma|_{[1,m_1]}=\sigma_1|_{[1,m_1]}$ and for $n\in\N$, $\gamma|_{(M_n,M_{n+1}]}=\sigma_{M_n}|_{(M_n,M_{n+1}]}$.  Let $D\geq1$ be the suppression unconditionality constant of  the weaving $(x^{\sigma(j)}_j,f^{\sigma(j)}_j)_{j=1}^\infty$.  For $n\in\N$, we have that
\begin{align*}
\|P_{\gamma,\Gamma_{M_n}\cap(M_n,M_{n+1}]}\|&=\|P_{\sigma_{M_n},\Gamma_{M_n}\cap(M_n,M_{n+1}]}\|\\
&\geq \|P_{\sigma_{M_n},\Gamma_{M_n}}\|-\|P_{\sigma_{M_n},\Gamma_{M_n}\cap[1,M_{n}]}\|\\
&=\|P_{\sigma_{M_n},\Gamma_{M_n}}\|-\|P_{\sigma,\Gamma_{M_n}\cap[1,M_{n}]}\|\geq M_n C^{-1}-DC.
\end{align*}
Thus, we have that $\sup_{n\in\N}\|P_{\gamma,\Gamma_{M_n}\cap(M_n,M_{n+1}]}\|=\infty$ and hence
$(x^{\gamma(j)}_j,f^{\gamma(j)}_j)_{j=1}^\infty$ is not
unconditional.

 \end{proof}

\section{Weaving bases}\label{S:3}

We now consider the case of weaving Schauder bases instead
of  approximate Schauder frames.
Recall that if $(x_j)_{j=1}^\infty$ is a Schauder basic sequence
then we say that it is {\em $C$-basic} for some $C\geq 1$ if  $\|\sum_{j=1}^N a_j x_j\|\leq
C\|\sum_{j=1}^\infty a_j x_j\|$ for all $N\in\N$ and all sequences
of scalars $(a_j)_{j=1}^\infty$ such that $\sum_{j=1}^\infty a_i
x_i$ converges.  We say that an unconditional Schauder basic sequence is {\em $C$-suppression unconditional} if
$\|\sum_{j\in E}a_j x_j\|\leq
C\|\sum_{j=1}^\infty a_j x_j\|$ for all $E\subseteq\N$ and all sequences
of scalars $(a_j)_{j=1}^\infty$ such that $\sum_{j=1}^\infty a_j
x_j$ converges.

In Section \ref{S:uniform} we proved that if every weaving of two
approximate Schauder frames is an approximate Schauder frame then
there is a uniform constant $C\geq 1$ such that the two approximate
Schauder frames are $C$-woven.  We now prove that this same
uniformity theorem holds for weaving Schauder bases.  The proof will follow the same concatonation argument as in the furthermore part of the proof of Theorem \ref{T:weaklyWeaving}.

\begin{prop}\label{P:basisConstant}
Let $(x_i^0)_{i=1}^\infty$ and $(x^1_i)_{i=1}^\infty$ be two
Schauder basic sequences in a Banach space $X$.  If
$(x^{\sigma(i)}_i)_{i=1}^\infty$ is a basic sequence for all
$\sigma\in\{0,1\}^\N$, then there is a uniform constant $C\geq1$ such
that for all $\sigma\in\{0,1\}^\N$, $(x^{\sigma(i)}_i)_{i=1}^\infty$
is  $C$-basic. Likewise,  if $(x^{\sigma(j)}_j)_{j=1}^\infty$ is an
unconditional basic sequence for all $\sigma\in\{0,1\}^\N$, then
there is a uniform constant $D\geq1$ such that for all
$\sigma\in\{0,1\}^\N$, $(x^{\sigma(j)}_j)_{j=1}^\infty$ is
$D$-unconditional.

\end{prop}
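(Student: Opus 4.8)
Throughout I would write, for $\sigma\in\{0,1\}^\N$ and $m\le n$, $Q^\sigma_{[m,n]}$ for the coordinate projection $\sum_i a_i x^{\sigma(i)}_i\mapsto\sum_{m\le i\le n}a_i x^{\sigma(i)}_i$ of the woven basic sequence $(x^{\sigma(i)}_i)$; on a finitely supported vector this depends only on $\sigma|_{[1,n]}$, and for fixed $m\le n$ it takes only finitely many values as $\sigma$ ranges over $\{0,1\}^\N$. As in Theorem~\ref{T:weaklyWeaving} I would argue by contradiction and use compactness of $\{0,1\}^\N$: if the basis constants were not uniformly bounded, I pick $\sigma_n$ whose weaving is not $n$-basic, pass to a subsequence with $\sigma_n\to\sigma$ and $\sigma_n|_{[1,n]}=\sigma|_{[1,n]}$, and record that the limiting weaving $(x^{\sigma(i)}_i)$ is itself basic, with a \emph{finite} constant $C_0$. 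The eventual aim is to splice the chosen weavings along consecutive disjoint windows, exactly as in the furthermore part of Theorem~\ref{T:weaklyWeaving}, into a single $\gamma\in\{0,1\}^\N$ whose weaving has unbounded interval projections and is therefore not basic.

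The clean endgame assumes the failure has been \emph{relocated to high coordinates}. Suppose one knows that for every $L$ the tail basis constants -- the basis constants of the tails $(x^{\sigma(i)}_i)_{i>L}$ -- are not uniformly bounded over $\sigma$. Then I would build $\gamma$ inductively: having fixed $\gamma$ on $[1,t_{k-1}]$, set $L=t_{k-1}$, pick a weaving $\rho_k$ with tail basis constant exceeding $k$, and take a normalized witness $v_k$ \emph{supported on} $(t_{k-1},M_k]$ and an $N_k\in(t_{k-1},M_k]$ with $\|Q^{\rho_k}_{(t_{k-1},N_k]}v_k\|>k$; let $\gamma$ agree with $\rho_k$ on $(t_{k-1},M_k]=:(t_{k-1},t_k]$. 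Since $v_k$ is supported inside the $k$-th window it is genuinely a vector of the $\gamma$-weaving and $\|Q^\gamma_{(t_{k-1},N_k]}v_k\|>k$; the sets $(t_{k-1},N_k]$ lie in disjoint windows, so $\gamma$ has unbounded interval projections and its weaving is not basic, the desired contradiction. For the unconditional statement the witnesses are finite sets $\Gamma_k$ with $\|Q^{\rho_k}_{\Gamma_k}v_k\|>k$ and one contradicts suppression unconditionality of the single weaving $\gamma$ instead, which is the part closest to Theorem~\ref{T:weaklyWeaving}.

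The real content, and the step I expect to be the main obstacle, is justifying this relocation. Here the basis setting differs essentially from the frame setting: in Theorem~\ref{T:weaklyWeaving} the functionals $f_i$ are attached to the coordinates once and for all, so a witnessing vector lives in $X$ and its low-index part $P_{\sigma_n,\Gamma\cap[1,M_n]}$ literally coincides with $P_{\sigma,\Gamma\cap[1,M_n]}$ and is controlled by unconditionality of $\sigma$. For bases the biorthogonal functionals depend on the \emph{entire} weaving, a witnessing vector of one weaving need not be a vector of the spliced weaving $\gamma$ at all, and a large basis constant may be produced purely by a near-cancellation between a low block (in the span of the first coordinates) and a high block, so that no low-index projection is bounded. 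I would resolve this in two moves. First, a compactness lemma: for each fixed $L$, $\sup_\sigma\|Q^\sigma_{[1,L]}\|<\infty$. Were this false, after a subsequence the first $L$ coordinates are constant, a normalized $v_k$ has head $z_k:=Q^{\sigma_k}_{[1,L]}v_k$ with $\|z_k\|\to\infty$, and $z_k/\|z_k\|$ converges in the finite-dimensional head sphere to a unit vector $z^*$ while the normalized tails converge to $-z^*$; since for each fixed $s$ the $(L,s]$-part of these tails tends to $0$, their mass escapes to infinity, and a gliding-hump extraction places high-index pieces $q_k\to -z^*$ on disjoint windows of a single $\gamma$, whence $\|Q^\gamma_{[1,L]}\|=\infty$ and the $\gamma$-weaving is not basic, contradicting the hypothesis. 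Second, with $\sup_\sigma\|Q^\sigma_{[1,L]}\|<\infty$ secured, the nesting identity $Q^\sigma_{[1,N]}=Q^\sigma_{[1,N]}Q^\sigma_{[1,L]}$ for $N\le L$, the estimate $\|Q^\sigma_{(L,N]}v\|\le(\text{tail constant})\cdot\|Q^\sigma_{(L,\infty)}v\|$, and the finiteness of the finitely many finite-block constants together show that a uniform bound on the tail basis constants for a single $L$ would force a uniform bound on all basis constants; contrapositively, unbounded basis constants give unbounded tail basis constants for every $L$, which is exactly the relocation needed to run the endgame above.
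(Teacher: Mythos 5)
Your overall architecture is sound, and your diagnosis of where the difficulty lies is in fact sharper than the treatment in the paper itself: the splicing endgame with witnesses supported inside disjoint windows is exactly how the paper finishes, your second move (uniform head-projection bound plus uniform tail constant implies uniform basis constant, via $Q^\sigma_{[1,N]}=Q^\sigma_{[1,L]}+Q^{\sigma,\mathrm{tail}}_{(L,N]}(I-Q^\sigma_{[1,L]})$ and the finitely many heads) is correct, and you are right that everything hinges on the ``relocation.'' The paper disposes of that step in one line --- ``$(x_i^{\sigma_n(i)})_{i=1}^n$ is $C$-basic, and hence $(x_i^{\sigma_n(i)})_{i=n+1}^\infty$ is not $(n-C)$-basic'' --- and that implication is invalid for exactly the reason you name: a sequence can have head and tail each $1$-basic while its basis constant is huge, e.g. $(e_1,\ e_1+\vp e_2,\ e_3,e_4,\dots)$ in $\ell_2$, because the basis constant also sees the gap between the head span and the tail span, not just the two standalone constants.

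The problem is that your proof of the compactness lemma --- $\sup_\sigma\|Q^\sigma_{[1,L]}\|<\infty$ for each fixed $L$ --- has a genuine gap, and (given your easy second move) this lemma \emph{is} the proposition, so the gap is at the decisive point. The unjustified step is ``since for each fixed $s$ the $(L,s]$-part of these tails tends to $0$.'' Norm convergence $w_k/\|z_k\|\to -z^*$ says nothing about the coordinate decompositions of $w_k/\|z_k\|$ relative to the weavings $\sigma_k$: to conclude that the $(L,s]$-parts are even bounded, let alone null, you need a bound on $\|Q^{\sigma_k}_{(L,s]}\|$ uniform in $k$, which is precisely the type of statement the lemma is meant to deliver --- the argument is circular exactly where it matters. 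Concretely, nothing you have established rules out either of the following: (i) $w_k/\|z_k\|=a+b_k$ with a fixed nonzero $a$ supported on common coordinates in some $(L,s_0]$ and $b_k$ escaping (your claim is then false, though the hump survives by absorbing $a$ into the head); or (ii) $w_k/\|z_k\|=a_k+b_k$ with $\|a_k\|,\|b_k\|\to\infty$, $a_k$ supported on common coordinates spread over longer and longer initial segments, and only the sum $a_k+b_k$ converging to $-z^*$. In case (ii) the mass does not escape, the pieces $b_k$ do not converge to $-z^*$, and the low part of each later witness overlaps the windows already committed to earlier witnesses, so no single $\gamma$ with $\|Q^\gamma_{[1,L]}\|=\infty$ comes out of the extraction. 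Handling (ii) requires an ingredient not present in the proposal (for instance a Baire-category argument on the compact space $\{0,1\}^\N$, producing a cylinder with uniformly bounded constants and hence uniformly bounded tail coordinate projections \emph{before} any gliding hump is attempted); as written, the lemma, and with it the proof, is incomplete.
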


\begin{proof}
Assume by way of contradiction that  $(x^{\sigma(i)}_i)_{i=1}^\infty$ is a basic sequence
for all $\sigma\in\{0,1\}^\N$, but that for all $n\in\N$, there
exists $\sigma_n\in\{0,1\}^\N$ such that
$(x^{\sigma_n(i)}_i)_{i=1}^\infty$ is not $n$-basic.  After passing
to a subsequence, we may assume that there exists
$\sigma\in\{0,1\}^\N$ such that $\sigma_n\rightarrow\sigma$ in the
product topology of $\{0,1\}^\N$.  Let $C\geq1$ be such that
$(x^{\sigma(i)}_i)_{i=1}^\infty$ is $C$-basic. After passing to a
further subsequence, we may assume that
$\sigma_n|_{[1,n]}=\sigma|_{[1,n]}$ for all $n\in\N$.  Thus, for all
$n\in\N$, $(x_i^{\sigma_n(i)})_{i=1}^n$ is $C$-basic, and hence
$(x_i^{\sigma_n(i)})_{i=n+1}^\infty$ is not $(n-C)$-basic.  Let
$(m_n)_{n=1}^\infty$ be an increasing subsequence of natural numbers
such that for all $n\in\N$, $(x_i^{\sigma_n(i)})_{i=n+1}^{m_n}$ is
not $(n-C)$-basic.  We can now create $\gamma\in\{0,1\}^\N$ by
$\gamma|_{[1,m_1]}=\sigma_1|_{[1,m_1]}$,
$\gamma|_{(m_1,m_{m_1}]}=\sigma_{m_1}|_{(m_1,m_{m_1}]}$,
$\gamma|_{(m_{m_1},m_{m_{m_1}}]}=\sigma_{m_{m_1}}|_{(m_{m_1},m_{m_{m_1}}]}$,
and so on.  We have that $(x^{\gamma(i)}_i)_{i=1}^\infty$ is not
basic because it is formed by concatonating finite length basic
sequences whose basis constants go to infinity, which is a
contradiction.

A similar argument shows that if for all $\sigma\in\{0,1\}^\N$,
$(x^{\sigma(i)}_i)_{i=1}^\infty$ is an unconditional basic sequence
then there is a universal constant $D\geq1$ such that for all
$\sigma\in\{0,1\}^\N$,  $(x^{\sigma(i)}_i)_{i=1}^\infty$ is
$D$-unconditional.
\end{proof}

Suppose that $(x^0_j)_{j=1}^\infty$ and
$(x^1_j)_{j=1}^\infty$ are two Schauder bases for a Banach space $X$
with biorthogonal functionals $(x^{0*}_i)_{j=1}^\infty$ and
$(x^{1*}_i)_{j=1}^\infty$.  We can consider the problem of whether
or not $(x^0_i)_{j=1}^\infty$ and $(x^1_j)_{j=1}^\infty$ are woven
Schauder bases, and we can consider the problem of whether or not
$(x^0_j,x^{0*}_j)_{j=1}^\infty$ and $(x^1_j,x^{1*}_j)_{j=1}^\infty$
are woven approximate Schauder frames.  The first natural question
to ask is if these two notions are equivalent.  In other words, are
$(x^0_i)_{j=1}^\infty$ and $(x^1_j)_{j=1}^\infty$ woven Schauder
bases if and only if $(x^0_j,x^{0*}_j)_{j=1}^\infty$ and
$(x^1_j,x^{1*}_j)_{j=1}^\infty$ are woven approximate Schauder
frames?  Surprisingly, the answer is no.

\begin{ex}\label{E:conditional weaving} We have the following examples in $c_0$ and $\ell_1$.
\begin{enumerate}
\item Let $(e_j)_{j=1}^\infty$ be the standard unit vector basis for
$c_0$ with biorthogonal functionals $(e^*_j)_{j=1}^\infty$, and let
$(s_j)_{j=1}^\infty$ be the summing basis for $c_0$ with
biorthogonal functionals $(s^*_j)_{j=1}^\infty$.  That is,
$s_j=\sum_{i=1}^j e_i$ for all $j\in\N$. Then, $(s_j)_{j=1}^\infty$
is a conditional Schauder basis for $c_0$ which is woven with the
unconditional Schauder basis $(e_j)_{j=1}^\infty$, but
$(s_j,s^*_j)_{j=1}^\infty$ and $(e_j,e^*_j)_{j=1}^\infty$ are not
woven approximate Schauder frames.

\item Let $(e_j)_{j=1}^\infty$ be the standard unit vector basis for
$\ell_1$ with biorthogonal functionals $(e^*_j)_{j=1}^\infty$. Let
$x_{1}=e_{1}$ and $x_{n}=e_{n}-e_{n-1}$ for $n>1$.  Then,
$(x_j)_{j=1}^\infty$ is a conditional Schauder basis for $\ell_1$
which is woven with the unconditional Schauder basis
$(e_j)_{j=1}^\infty$, but $(x_j,x^*_j)_{j=1}^\infty$ and
$(e_j,e^*_j)_{j=1}^\infty$ are not woven approximate Schauder
frames, where $(x^*_j)_{j=1}^\infty$ is the sequence of biorthogonal
functionals to $(x_j)_{j=1}^\infty$.
\end{enumerate}
\end{ex}

Example \ref{E:conditional weaving} shows that it is possible for a
conditional Schauder basis to be woven with an unconditional
Schauder basis. The following proposition shows that this is
impossible for woven approximate Schauder frames.

\begin{prop}\label{P:Unconditional constants}
Let $(x^0_j,f^0_j)_{j=1}^\infty$ and $(x^1_j,f^1_j)_{j=1}^\infty$ be
$C$-approximate Schauder frames for a Banach space $X$.  Suppose
that $(x^0_j,f^0_j)_{j=1}^\infty$ is $K$-suppression unconditional
and that $(x^1_j,f^1_j)_{j=1}^\infty$ is not $D$-suppression
unconditional. Then $(x^0_j,f^{0}_j)_{j=1}^\infty$ and
$(x^1_j,f^{1}_j)_{j=1}^\infty$ are not $(DC^{-1}-KC)$-woven.
\end{prop}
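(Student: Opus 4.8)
The plan is to produce a single weaving whose frame operator $P_{\sigma,\N}$ satisfies $\|P_{\sigma,\N}\| > DC^{-1}-KC$. Since a $(DC^{-1}-KC)$-approximate Schauder frame must have frame operator of norm at most $DC^{-1}-KC$, this one weaving already witnesses that the two frames are not $(DC^{-1}-KC)$-woven. The weaving I would use applies the badly-behaved frame $(x^1_j,f^1_j)_{j=1}^\infty$ exactly on a finite set $\Gamma$ where its suppression behaviour is large, and applies the $K$-suppression unconditional frame $(x^0_j,f^0_j)_{j=1}^\infty$ on the complement $\Gamma^c$, where its contribution can be controlled by $KC$.

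First I would extract the witness. Denote by $S^0$ and $S^1$ the frame operators of the two given frames. The failure of $D$-suppression unconditionality for $(x^1_j,f^1_j)_{j=1}^\infty$ yields a unit vector $x\in X$ and a finite set $\Gamma\subseteq\N$ with $\|\sum_{j\in\Gamma}f^1_j(x)x^1_j\| > D\|S^1 x\|$. Define $\sigma\in\{0,1\}^\N$ by $\sigma(j)=1$ for $j\in\Gamma$ and $\sigma(j)=0$ for $j\notin\Gamma$. Since $\Gamma$ is finite and $(x^0_j,f^0_j)_{j=1}^\infty$ is unconditional, the operator $P_{\sigma,\N}$ is well defined, and
\[
P_{\sigma,\N}x=\sum_{j\in\Gamma}f^1_j(x)x^1_j+\sum_{j\in\Gamma^c}f^0_j(x)x^0_j ,
\]
so the triangle inequality gives $\|P_{\sigma,\N}x\|\geq\|\sum_{j\in\Gamma}f^1_j(x)x^1_j\|-\|\sum_{j\in\Gamma^c}f^0_j(x)x^0_j\|$. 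For the complementary term, $K$-suppression unconditionality of the first frame gives $\|\sum_{j\in\Gamma^c}f^0_j(x)x^0_j\|\leq K\|S^0 x\|\leq KC\|x\|=KC$, using $\|S^0\|\leq C$. For the main term, the witness together with $\|(S^1)^{-1}\|\leq C$ (whence $\|S^1 x\|\geq C^{-1}\|x\|=C^{-1}$) gives $\|\sum_{j\in\Gamma}f^1_j(x)x^1_j\|>D\|S^1 x\|\geq DC^{-1}$. Combining, $\|P_{\sigma,\N}\|\geq\|P_{\sigma,\N}x\|>DC^{-1}-KC$, which completes the argument.

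The only delicate point, and the step I expect to be the main obstacle, is producing the \emph{finite} witness $\Gamma$. If $(x^1_j,f^1_j)_{j=1}^\infty$ is unconditional but merely fails $D$-suppression unconditionality, one obtains the inequality with a possibly infinite $\Gamma$ and a convergent series, and restricting to a sufficiently long finite initial segment of $\Gamma$ preserves the strict inequality. If instead $(x^1_j,f^1_j)_{j=1}^\infty$ is not unconditional at all, then its finite-set suppression constants are unbounded, so for the given $D$ one can select directly a finite $\Gamma$ and a unit $x$ with $\|\sum_{j\in\Gamma}f^1_j(x)x^1_j\|>DC\geq D\|S^1 x\|$. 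In both cases a finite $\Gamma$ is available, which is exactly what guarantees that $P_{\sigma,\N}$ is well defined; everything else reduces to the triangle inequality and the defining bounds $\|S^i\|,\|(S^i)^{-1}\|\leq C$.
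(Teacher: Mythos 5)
Your core argument is exactly the paper's proof: extract a witness vector $x$ and index set on which the suppression inequality for $(x^1_j,f^1_j)_{j=1}^\infty$ fails, weave using the characteristic function of that set, and combine the triangle inequality with $\|S^0\|\leq C$ and $\|(S^1)^{-1}\|\leq C$ to exhibit a weaving whose frame operator has norm exceeding $DC^{-1}-KC$. That part is correct, and your insistence on a \emph{finite} witness set $\Gamma$ is a genuine (minor) refinement over the paper, which allows an infinite witness set $A$ without comment: with $\Gamma$ finite, the woven frame operator is manifestly well defined on all of $X$, not just at the witness vector.

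However, the second branch of your final dichotomy rests on a false claim. It is not true that a conditional approximate Schauder frame must have unbounded finite-set suppression constants: uniform boundedness of the finite-subset sums only says that each series $\sum_j f_j(x)x_j$ is weakly unconditionally Cauchy, and in a space containing $c_0$ a convergent weakly unconditionally Cauchy series need not converge unconditionally. Concretely, in $c_0$ consider the Schauder frame consisting, for each $n$, of the element $(e_n,e^*_n)$ followed by $n$ consecutive pairs $(e_n,\tfrac{1}{n} e^*_1)$, $(-e_n,\tfrac{1}{n} e^*_1)$. Its frame operator is the identity, and every finite-subset sum satisfies $\left\|\sum_{j\in\Gamma}f_j(x)x_j\right\|\leq 2\|x\|$, yet at $x=e_1$ the subseries over the elements $(e_n,\tfrac{1}{n} e^*_1)$ is $\sum_n e_n$, which diverges in $c_0$; so this frame is conditional with finite-set suppression constant at most $2$. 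Fortunately, the faulty branch is never needed: the hypothesis ``not $D$-suppression unconditional'' is used in the paper as asserting the existence of $x$ and $A\subseteq\N$ for which $\sum_{j\in A}f^1_j(x)x^1_j$ converges and has norm greater than $D\|S^1x\|$, and whenever that sum converges, your first branch (truncating to a sufficiently long finite initial segment of $A$) produces the finite witness, whether or not the frame is globally unconditional. Deleting the second branch leaves a correct proof that matches the paper's.
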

\begin{proof}

Let $S_0$ be the frame operator for $(x^0_j,f^0_j)_{j=1}^\infty$ and
let $S_1$ be the frame operator for $(x^1_j,f^1_j)_{j=1}^\infty$.
Let $x\in X$ and $A\subset \N$ such that $\|\sum_{j\in A}
f^{1}_j(x)x^1_j\|>D\|S_1x\|\geq DC^{-1}\|x\|$.  Let $\sigma\in
\{0,1\}^\N$ be the characteristic function of $A$.  Then
\begin{align*}
\bigg\|\sum_{j\in\N} f_j^{\sigma(j)}(x) x_{j}^{\sigma(j)}\bigg\|
&\geq 
\bigg\|\sum_{j\in A} f^{1}_j(x)x^1_j\bigg\|-\bigg\|\sum_{j\not\in A}
f^{0}_j(x)x^0_j\bigg\|\\
&>D\|S_1x\|-K\|S_0x\|\\
&\geq
DC^{-1}\|x\|-KC\|x\|.
\end{align*}
Thus the frame operator of
$(x^{\sigma(j)}_j,f^{\sigma(j)}_j)_{j=1}^\infty$ has norm greater
than $(DC^{-1}-KC)$ and hence $(x^0_j,f^{0}_j)_{j=1}^\infty$ and
$(x^1_j,f^{1}_j)_{j=1}^\infty$ are not $(DC^{-1}-KC)$-woven.
\end{proof}

We conclude the section with two examples that again demonstrate the significance of conditional compared to unconditional convergence. The next construction shows that it is possible for
two woven unconditional Schauder bases to have some conditional
weavings.

\begin{ex}
Let $(e_j)_{j=1}^\infty$ be the standard unit vector basis for
$\ell_1$. Define $(x_i^0)_{i=1}^\infty$ by $x_{2n - 1}^0 = e_n$ and
$x^0_{2n} = e_{2n} - e_{2n - 1}$ for all $n \in \mathbb{N}$ and
define $(x_i^1)_{i=1}^\infty$ by $x_1^1 = e_1$, $x_2^1 = e_2$, and
$x_{2n - 1}^1 = e_{2n-1} - e_{2n - 2}$ and $x_{2n}^1 = e_{2n}$ for
all $n \geq 2$. Then $(x_i^0)_{i=1}^\infty$ and
$(x_i^1)_{i=1}^\infty$ are woven Schauder bases, each of which are
unconditional.  However, the alternating weaving given by $(x^1_1,x^0_2,x^1_3,x^0_4,...)=(e_1,e_2-e_1,e_3-e_2,e_4-e_3,...)$ is the same conditional basis used in Example \ref{E:conditional weaving}.
\end{ex}

The following construction shows it is possible for all the weavings
of two unconditional Schauder bases to be Schauder basic sequences,
but there exists a weaving which does not have dense span.  In \cite{BCGLL}, it is proven that this is impossible for Riesz bases for Hilbert spaces, and in Theorem \ref{T:unc} we prove that this is impossible if all weavings are unconditional.

\begin{ex}\label{Ex:subspace}
Let $(e_j)_{j=1}^\infty$ be the standard unit vector basis for $\ell_1$. Define $(x_i^0)_{i = 1}^\infty$ by $x_{2n-1}^0 = e_{2n-1} + e_{2n}$ and $x_{2n}^0 = e_{2n-1} - e_{2n}$ for all $n \in \mathbb{N}$ and define $(x_i^1)_{i = 1}^\infty$ by $x_1^1 = e_1$ and $x_{2n}^1 = e_{2n} + e_{2n+1}$ and $x_{2n+1}^1 = e_{2n} - e_{2n+1}$ for all $n \in \mathbb{N}$. Then $(x_i^0)_{i=1}^{\infty}$
and $(x_i^1)_{i=1}^{\infty}$ are both unconditional Schauder bases of $\ell_1$ and every weaving  is a Schauder basic sequence.  However, the alternating weaving
$(x^0_1,x^1_2,x^0_3,x^1_4,...)=(e_1+e_2,e_2+e_3,e_3+e_4,e_4+e_5,...)$ does not contain $e_1$ in its closed span and thus is not a basis.
\end{ex}

\section{Weaving unconditional Schauder bases}

In this section, we consider weaving unconditional bases. First, we show that two woven unconditional bases are necessarily equivalent. We conclude the section by proving that two unconditional bases that are woven and have all unconditional weavings is equivalent to five other characterizations, including in particular that all weavings are unconditional basic sequences and all weavings of the corresponding approximate Schauder frames are unconditional approximate Schauder frames.  Recall that in Section \ref{S:3} we gave counterexamples to these equivalences when it is not the case that all weavings are unconditional.
 We first cite the following theorem, which can be found in \cite{LT} along with a thorough introduction to Banach lattices.

\begin{thm}\label{square}
Let $(x_i)_{i=1}^\infty$ be an unconditional basis for a Banach lattice $X$.  There exists a constant $D\geq1$ such that for every $n\in\N$ and sequence of scalars $(a_i)_{i=1}^n$, we have that
$$D^{-1}\left\Vert\left(\sum_{i=1}^n |a_i x_i|^2\right)^{1/2}\right\Vert\leq \left\Vert \sum_{i=1}^n a_i x_i\right\Vert\leq D \left\Vert\left(\sum_{i=1}^n |a_i x_i|^2\right)^{1/2}\right\Vert.
$$
\end{thm}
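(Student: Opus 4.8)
The plan is to derive the estimate from the unconditionality of $(x_i)_{i=1}^\infty$ together with the Khintchine inequality in its Banach lattice form, so that the two-sided bound becomes a comparison between a Rademacher average of norms and the norm of the square function. I would fix scalars $(a_i)_{i=1}^n$, set $y_i=a_i x_i$, and let $C\geq 1$ be an unconditional constant for $(x_i)_{i=1}^\infty$, so that for every choice of signs $(\vp_i)_{i=1}^n$ we have $C^{-1}\|\sum_{i=1}^n y_i\|\leq\|\sum_{i=1}^n\vp_i y_i\|\leq C\|\sum_{i=1}^n y_i\|$. Denoting by $(r_i)_{i=1}^\infty$ the Rademacher functions on $[0,1]$ and averaging the middle quantity over the signs $\vp_i=r_i(t)$, we obtain
\[
C^{-1}\Big\|\sum_{i=1}^n y_i\Big\|\leq \int_0^1\Big\|\sum_{i=1}^n r_i(t)y_i\Big\|\,dt\leq C\Big\|\sum_{i=1}^n y_i\Big\|.
\]
Thus the whole statement reduces to comparing $\int_0^1\|\sum_i r_i(t)y_i\|\,dt$ with $\|(\sum_i|y_i|^2)^{1/2}\|$.

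The second ingredient is the lattice version of the Khintchine inequality. Using Krivine's functional calculus, the element $u:=(\sum_{i=1}^n|y_i|^2)^{1/2}$ is well defined in $X$, and the finitely many vectors $y_1,\dots,y_n$ may be represented as genuine functions on a measure space in such a way that the lattice operations act pointwise. Applying the scalar Khintchine inequality pointwise and then integrating in $t$ yields, in the order of the lattice,
\[
A\,u\leq \int_0^1\Big|\sum_{i=1}^n r_i(t)y_i\Big|\,dt\leq B\,u,
\]
where $0<A\leq B$ are the universal Khintchine constants. Because the lattice norm is monotone on the positive cone, taking norms gives $A\|u\|\leq\big\|\int_0^1|\sum_i r_i(t)y_i|\,dt\big\|\leq B\|u\|$.

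Combining the two parts gives one of the desired inequalities immediately: by the triangle inequality for the (Bochner) integral, $\big\|\int_0^1|\sum_i r_i(t)y_i|\,dt\big\|\leq\int_0^1\|\sum_i r_i(t)y_i\|\,dt$, so chaining with the displays above produces $A\|u\|\leq C\|\sum_i y_i\|$, i.e.\ $\|(\sum_i|y_i|^2)^{1/2}\|\leq (C/A)\|\sum_i a_i x_i\|$. The reverse inequality $\|\sum_i a_i x_i\|\leq D\|(\sum_i|y_i|^2)^{1/2}\|$ is where I expect the main obstacle. From the first display it would suffice to bound $\int_0^1\|\sum_i r_i(t)y_i\|\,dt$ above by a multiple of $\|u\|$, but here the triangle inequality points the wrong way, and one must instead interchange the Rademacher average with the lattice norm. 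When the norm of $X$ is an $L_1$-type integral of $|\cdot|$ this interchange is immediate from Fubini's theorem and the pointwise upper Khintchine bound; for a general Banach lattice it is precisely this step --- equivalently the upper lattice Khintchine estimate $\int_0^1\|\sum_i r_i(t)y_i\|\,dt\leq B'\|u\|$ --- that carries the real content and must be supplied by Krivine's representation, since no elementary norm manipulation produces it. Taking $D$ to be the larger of $C/A$ and $CB'$ then yields the claimed two-sided inequality.
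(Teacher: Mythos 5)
Your first direction is sound, and the paper itself offers no proof to compare against: Theorem \ref{square} is cited from \cite{LT}, not proved. The pointwise (order) inequalities you extract from Krivine's functional calculus are legitimate: with $y_i=a_ix_i$ and $u=(\sum|y_i|^2)^{1/2}$ one has $A\,u\leq \mathrm{Ave}_{\pm}|\sum \pm y_i|\leq u$ in the lattice order, and combining the left-hand one with monotonicity of the norm, the triangle inequality for the finite average, and unconditionality does give $\|u\|\leq \sqrt{2}\,C\|\sum a_ix_i\|$.

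The second direction is where your argument has a genuine gap, and the step you defer to ``Krivine's representation'' is not a technicality --- it is false at the level of generality at which you invoke it. Krivine's calculus yields order inequalities only; taking norms in the order-level upper bound gives $\|\mathrm{Ave}_{\pm}|\sum\pm y_i|\,\|\leq\|u\|$, which sits on the wrong side of Jensen and does not dominate $\mathrm{Ave}_{\pm}\|\sum\pm y_i\|$. The norm-level estimate you need, $\mathrm{Ave}_{\pm}\|\sum\pm y_i\|\leq B'\|u\|$, holds only in lattices of finite concavity (this is Maurey's theorem), and it fails even for vectors of your special form $y_i=a_ix_i$ with $(x_i)$ $1$-unconditional: take $X=C(\{-1,1\}^n)$ and $x_i(\eta)=\eta_i$ for $i\leq n$. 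Then $\|\sum\pm a_ix_i\|_\infty=\sum|a_i|$ for every choice of signs, so $(x_i)$ is $1$-unconditional, while $(\sum|a_ix_i|^2)^{1/2}$ is the constant function $(\sum a_i^2)^{1/2}$; no constant $B'$ can satisfy $\|a\|_1\leq B'\|a\|_2$ for all $n$. Since your argument never uses that $(x_i)$ spans all of $X$ --- it would apply verbatim to any unconditional basic sequence --- this example shows it cannot be repaired within your framework; the hypothesis that $(x_i)$ is a basis of the whole lattice is essential. The standard proof exploits it by duality: pick a norming functional $x^*$ for $\sum a_ix_i$ and set $b_i=x^*(x_i)$; the biorthogonal functionals $x_i^*$ lie in the Banach lattice $X^*$, and $\|\sum_{i\leq n}\pm b_ix_i^*\|_{X^*}$ is bounded by a constant depending only on the unconditional constant, because sign-change operators and basis projections are bounded on all of $X$. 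Then the lattice Cauchy--Schwarz inequality $\sum|a_ib_i|\leq\langle(\sum|a_ix_i|^2)^{1/2},(\sum|b_ix_i^*|^2)^{1/2}\rangle$ together with your (correct) lower Khintchine estimate, now applied in $X^*$ to the vectors $b_ix_i^*$, yields $\|\sum a_ix_i\|\leq\sqrt{2}\,K^2\|(\sum|a_ix_i|^2)^{1/2}\|$.
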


When we apply Theorem \ref{square}, the Banach lattice structure for $X$ will be defined by a different 1-unconditional basis $(u_i)_{i=1}^\infty$ with biorthogonal functionals $(u^*_i)_{i=1}^\infty$.  In this case, if $(x_i)_{i=1}^\infty$ is a sequence in $X$, and $(a_i)_{i=1}^n$ is a finite sequence of scalars then the vector
$\left(\sum_{i=1}^n |a_i x_i|^2\right)^{1/2}$ is given by
\begin{equation}\label{E:lattice}
\left(\sum_{i=1}^n |a_i x_i|^2\right)^{1/2}=\sum_{j=1}^\infty \left(\sum_{i=1}^n|a_i u^*_j(x_i)|^2\right)^{1/2}\!\!\!u_j.
\end{equation}

We have previously given an example of a conditional Schauder basis for $\ell_1$ such that every weaving with the unit vector basis for $\ell_1$ is a Schauder basis.  In particular, this gives an example of two non-equivalent Schauder bases for a Banach space $X$ such that every weaving is a Schauder basis for $X$.  Our next result shows that this is impossible if both bases are unconditional.  Recall that two Schauder basic sequences $(x_j)_{j=1}^\infty$ and $(y_j)_{j=1}^\infty$ are said to be {\em equivalent} if there are constants $c,C>0$ such that $c\|\sum a_j x_j\|\leq \|\sum a_j y_j\|\leq C\|\sum a_j x_j\|$ for all $(a_j)_{j=1}^\infty\in \coo$.

\begin{thm}\label{T:equivalent}
Let $(x^0_i)_{i=1}^\infty$ and $(x^1_i)_{i=1}^\infty$ be two  semi-normalized unconditional bases for a Banach space $X$ such that for all $\sigma\in\{0,1\}^\N$, $(x^{\sigma(i)}_i)_{i=1}^\infty$ is a basic sequence.  Then,  $(x^0_i)_{i=1}^\infty$ and $(x^1_i)_{i=1}^\infty$ are equivalent.
\end{thm}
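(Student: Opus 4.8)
The plan is to show that the formal identity $x^0_i\mapsto x^1_i$ and its inverse are both bounded on $\coo$; by the symmetry of the hypotheses it suffices to bound one of them, say to produce a constant $M$ with $\|\sum_i a_i x^1_i\|\le M\|\sum_i a_i x^0_i\|$ for every finitely supported scalar sequence, the reverse inequality being identical with the roles of the two bases exchanged. First I would fix the constants I am entitled to use: by Proposition \ref{P:basisConstant} there is a uniform $C\ge 1$ so that every weaving $(x^{\sigma(i)}_i)_{i=1}^\infty$ is $C$-basic, and since both bases are unconditional and semi-normalized I fix a common suppression-unconditional constant $K\ge 1$ and bounds $0<a\le\|x^{\epsilon}_i\|\le b$.

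The structural input from the woven hypothesis that I would isolate at the outset is the following. Given finite disjoint sets $A,B\subseteq\N$ with $\max A<\min B$ and scalars, apply $C$-basicness to the weaving that is $0$ on $A$ and $1$ on $B$, with coefficient vector supported on $A\cup B$; since $A$ is the initial segment of $A\cup B$ this yields
$$\Big\|\sum_{i\in A} a_i x^0_i\Big\|\le C\,\Big\|\sum_{i\in A} a_i x^0_i+\sum_{j\in B} b_j x^1_j\Big\|,$$
together with the analogous inequalities obtained by interchanging the roles of the two bases and the two orderings. In other words, on disjoint supports an $x^0$-block and an $x^1$-block are mutually $C$-suppressible: the mixed sum controls, up to $C$, each of its two pieces. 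It is exactly this feature that breaks down for the summing basis in Example \ref{E:conditional weaving}, where conditionality lets the two pieces cancel, so any correct argument must use it.

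To turn this into a two-sided norm comparison I would linearize both bases through Theorem \ref{square}. After an equivalent renorming making $(x^0_i)$ $1$-unconditional, $X$ becomes a Banach lattice whose canonical $1$-unconditional basis is $(x^0_i)$, and $(x^1_i)$ is an unconditional basis of this lattice. Theorem \ref{square} then supplies a constant $D$ for which $\|\sum_i a_i x^1_i\|$ is equivalent, up to the factor $D$, to $\|(\sum_i |a_i x^1_i|^2)^{1/2}\|$, whereas for $(x^0_i)$ the square function is simply $\sum_i |a_i| x^0_i$, whose norm equals $\|\sum_i a_i x^0_i\|$. Using the representation \eqref{E:lattice} of the square function in the $(x^0_i)$-coordinates, the desired equivalence is reduced to comparing these two positive lattice elements for a common coefficient vector.

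The main obstacle is precisely this last comparison: the weaving inequality speaks about genuinely disjoint index blocks, whereas the two square functions live on a common support. I would bridge this by a gliding-hump/disjointification argument run in the contrapositive. If no constant $M$ works, I would extract successive finitely supported blocks $u_n=\sum_{i\in A_n} a^{(n)}_i x^0_i$ with $\|u_n\|\le 1$ while $\|\sum_{i\in A_n} a^{(n)}_i x^1_i\|\to\infty$, using the unconditionality of $(x^0_i)$ and the uniform boundedness of the initial basis projections to keep the supports $A_1<A_2<\cdots$ successive. The delicate step is then to assemble these blocks into a single weaving whose partial-sum projections are unbounded, contradicting the uniform constant $C$ of Proposition \ref{P:basisConstant}; this is where the square-function description earns its place, since it lets the disjoint-support inequality be applied to the coordinate blocks produced by \eqref{E:lattice} rather than to the original vectors, and it is also exactly the point at which unconditionality cannot be dropped. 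By symmetry the same argument bounds the inverse identity, and the two bounds together give that $(x^0_i)_{i=1}^\infty$ and $(x^1_i)_{i=1}^\infty$ are equivalent.
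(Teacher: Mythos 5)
Your toolkit is the same as the paper's (Proposition \ref{P:basisConstant} for a uniform basis constant, a renorming making one basis $1$-unconditional, the square function of Theorem \ref{square}, and the representation \eqref{E:lattice}), but the step you defer as ``delicate'' is the entire proof, and the plan you sketch for it faces a structural obstruction rather than a technical one. A weaving contains exactly one of $x^0_i$, $x^1_i$ for each index $i$. Your gliding hump produces blocks $u_n=\sum_{i\in A_n}a^{(n)}_ix^0_i$ and $v_n=\sum_{i\in A_n}a^{(n)}_ix^1_i$ with the \emph{same} index support $A_n$, so no single weaving can see both of them, and the disjoint-support inequality you isolated (which requires an $x^0$-block and an $x^1$-block on \emph{disjoint} index sets) never applies to the pair $(u_n,v_n)$. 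Passing to square-function coordinates does not repair this: by \eqref{E:lattice}, the coordinates of $u_n$ with respect to the other basis are in general supported on all of $\N$, so the ``coordinate blocks'' of distinct $u_n$'s are not disjointly supported either, and again no weaving inequality applies. Unboundedness of the formal identity on successive blocks simply does not translate into unboundedness of partial-sum projections of any one weaving, so the contradiction you aim for cannot be reached along this route.

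The ingredient you are missing is a \emph{diagonal} estimate, and it is exactly what the paper uses. From the uniform constant $C'$ of Proposition \ref{P:basisConstant} and semi-normalization ($a\leq\|x^{\epsilon}_i\|\leq b$) one gets $\inf_i|x^{1*}_i(x^0_i)|\geq c>0$: in the weaving that uses $x^0_i$ at index $i$ and $x^1_j$ at every $j\neq i$, the vector $x^0_i-\sum_{j\neq i}x^{1*}_j(x^0_i)x^1_j=x^{1*}_i(x^0_i)x^1_i$ lies in the closed span of that weaving and has expansion coefficient $1$ on $x^0_i$, so $\|x^0_i\|\leq 2C'\,|x^{1*}_i(x^0_i)|\,\|x^1_i\|$, i.e. $|x^{1*}_i(x^0_i)|\geq a/(2C'b)$. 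With this in hand no contrapositive or gliding hump is needed: giving $X$ the lattice structure of the renormed $1$-unconditional basis $(x^1_j)$ and applying Theorem \ref{square} to $(x^0_i)$, the $j$-th coordinate in \eqref{E:lattice} of $\bigl(\sum_i|a_ix^0_i|^2\bigr)^{1/2}$ is $\bigl(\sum_i|a_ix^{1*}_j(x^0_i)|^2\bigr)^{1/2}\geq|a_j|\,|x^{1*}_j(x^0_j)|\geq c|a_j|$, so $1$-unconditionality (lattice monotonicity) yields $\bigl\|\sum_i a_ix^0_i\bigr\|\geq D^{-1}\bigl\|\bigl(\sum_i|a_ix^0_i|^2\bigr)^{1/2}\bigr\|\geq cD^{-1}\bigl\|\sum_j a_jx^1_j\bigr\|$, and symmetry finishes the proof. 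Indeed the paper remarks afterwards that only these diagonal lower bounds --- a far weaker consequence of the hypothesis than your disjoint-support inequalities --- are ever used. So: right tools, but the bridge from ``all weavings are basic'' to the norm comparison is the diagonal bound, not a gliding hump, and without it your argument has a genuine hole.
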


\begin{proof}
After renorming, we may assume that $(x^1_i)_{i=1}^\infty$ is
1-unconditional.  Let $X$ have the Banach lattice structure defined
by the 1-unconditional basis $(x^1_i)_{i=1}^\infty$.  Let $D$ be the
constant given in Theorem \ref{square}.  As every weaving of
$(x^0_i)_{i=1}^\infty$ and $(x^1_i)_{i=1}^\infty$ is
a basic sequence, there is
a constant $C>0$ such that $|x^{1*}_i(x^0_i)|\geq C$ for all
$i\in\N$. For any $(a_i)_{i=1}^\infty\in\coo$, we have the following
estimate:
\begin{align*}
\left\|\sum_{i=1}^\infty a_i x^0_i\right\|&\geq D^{-1}\left\|\left(\sum_{i=1}^\infty |a_i x^0_i|^2\right)^{1/2}\right\|\quad\textrm{ by Theorem }\ref{square},\\
&=D^{-1}\left\|\sum_{j=1}^\infty \left(\sum_{i=1}^\infty |a_i x^{1*}_j(x^0_i)|^2\right)^{1/2}x^1_j\right\|\quad\textrm{ by }\eqref{E:lattice},\\
&\geq CD^{-1}\left\|\sum_{j=1}^\infty a_j x^1_j\right\|\quad
\textrm{ as }(x^1_j)\textrm{ is }1\textrm{-unconditional and
}|x^{1*}_j(x^0_j)|\geq C\textrm{ for all }j\in\N.
\end{align*}
Therefore, we have that the basis $(x^0_i)_{i=1}^\infty$ dominates the basis $(x^1)_{i=1}^\infty$, and by the same argument $(x^1_i)_{i=1}^\infty$ dominates $(x^0)_{i=1}^\infty$.  Hence, $(x^0_i)_{i=1}^\infty$ and $(x^1)_{i=1}^\infty$ are equivalent.
\end{proof}

Note that in proving Theorem \ref{T:equivalent} it was only required that there exists $C>0$ such that $|x^{0*}_j(x^1_j)|\geq C$ and $|x^{1*}_j(x^0_j)|\geq C$ for $j\in\N$ which is a much weaker property than
$(x^0_i)_{i=1}^\infty$ and $(x^1_i)_{i=1}^\infty$ are woven.

The following lemmas will be used in characterizing when every weaving of two unconditional Schauder bases is also an unconditional Schauder basis.

\begin{lem}\label{babyproj}
Suppose $X$ is a Banach space having subspaces $Y$ and $Z$. Let $P:X \to Y$ be a projection from $X$ onto $Y$ and assume $P|_Z:Z \to Y$ is invertible. Then $Q = (P|_Z)^{-1} P$ is a projection onto $Z$.
\end{lem}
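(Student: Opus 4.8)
The plan is to check directly the three properties that make $Q$ a projection onto $Z$: that $Q$ maps $X$ into $Z$, that $Q$ fixes $Z$ pointwise, and that $Q$ is idempotent. First I would observe that all the compositions are well defined and bounded. Since $P|_Z:Z\to Y$ is invertible, its inverse $(P|_Z)^{-1}:Y\to Z$ is a bounded operator with range $Z$, and $P:X\to Y$ is bounded; hence $Q=(P|_Z)^{-1}P$ is a bounded linear map from $X$ into $Z$. The only point requiring care is the word ``invertible'', which I read as ``bijective with bounded inverse'' (automatic by the bounded inverse theorem when $Z$ is closed, and in any case guaranteeing that $(P|_Z)^{-1}$ is bounded, so that $Q$ is a genuine bounded operator).

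The heart of the argument is two elementary identities coming from the definition of the inverse. Composing $P$ after $(P|_Z)^{-1}$ yields the identity on $Y$: for $y\in Y$ the vector $(P|_Z)^{-1}y$ lies in $Z$, so $P\big((P|_Z)^{-1}y\big)=(P|_Z)\big((P|_Z)^{-1}y\big)=y$, that is, $P(P|_Z)^{-1}=\mathrm{id}_Y$. On the other side, for $z\in Z$ one has $Pz=(P|_Z)z$, whence $Qz=(P|_Z)^{-1}(P|_Z)z=z$; thus $Q|_Z=\mathrm{id}_Z$, so $Q$ restricts to the identity on its target subspace.

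From these, idempotency is immediate: $Q^2=(P|_Z)^{-1}P(P|_Z)^{-1}P=(P|_Z)^{-1}\,\mathrm{id}_Y\,P=(P|_Z)^{-1}P=Q$, using $P(P|_Z)^{-1}=\mathrm{id}_Y$. Finally, the range of $Q$ is exactly $Z$: it is contained in $Z$ by construction, and since $Q$ fixes every $z\in Z$ it is also onto $Z$. Therefore $Q$ is a bounded idempotent operator with range $Z$, i.e. a projection onto $Z$. I expect no real obstacle here: the entire content is the bookkeeping identity $P(P|_Z)^{-1}=\mathrm{id}_Y$, and the only thing to be mildly careful about is confirming boundedness of $(P|_Z)^{-1}$ so that the resulting projection is bounded.
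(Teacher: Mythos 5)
Your proof is correct and follows essentially the same route as the paper: the whole argument rests on the identity $P(P|_Z)^{-1}=\mathrm{id}_Y$, which gives $Q^2=Q$, exactly as in the paper's computation. Your additional verifications (boundedness of $(P|_Z)^{-1}$ and that $Q$ fixes $Z$ pointwise, hence is onto $Z$) simply spell out details the paper dismisses as clear.
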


\begin{proof}
The fact that $Q$ is onto $Z$ is clear. We check $Q$ is a projection:
\begin{align*}
Q^2 = (P|_Z)^{-1} \big(P (P|_Z)^{-1} P\big) = (P|_Z)^{-1} P = Q
\end{align*}
since if $x \in X$, then $Px \in Y$ and so $P (P|_Z)^{-1} Px = Px$.
\end{proof}

For a Banach space $X$ with subspaces $X_0,X_1\subseteq X$, the distance between $X_0$ and $X_1$ measures how close a point in the sphere of one space can be to a point in the other space. That is,
$$d(X_0,X_1)=\inf\{\|x-y\|\,:\, x\in S_{X_j},y\in X_{1-j}\textrm{ and }j\in\{0,1\}.\}
$$

\begin{lem}\label{projections}
Suppose $X$ is a Banach space and $X = X_1 \oplus X_2 = Y_1 \oplus Y_2$ for some subspaces $X_1$, $X_2$, $Y_1$, $Y_2$ of $X$. Assume $P$ is a projection onto $X_1$ with $(I-P)$ a projection onto $X_2$, and $Q$ is a projection on $Y_1$ with $(I-Q)$ projecting onto $Y_2$. Assume $Q|_{X_1}:X_1 \to Y_1$ and $(I-P)|_{Y_2}:Y_2 \to X_2$ are invertible and $d(X_1,Y_2) > 0$. Then $X = X_1 \oplus Y_2$.
\end{lem}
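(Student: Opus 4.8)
The plan is to produce a single bounded projection of $X$ onto $Y_2$ whose kernel is exactly $X_1$; once such an operator is in hand, the topological direct sum $X=X_1\oplus Y_2$ follows immediately, since any bounded idempotent $R$ splits $X=\ker R\oplus \operatorname{ran} R$ topologically. The natural tool is Lemma \ref{babyproj}, which was set up precisely to manufacture such a projection from an existing one together with an invertibility hypothesis. So first I would put myself in position to invoke it.

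Concretely, I would apply Lemma \ref{babyproj} taking the lemma's projection to be $I-P$, its target subspace $Y$ to be $X_2$, and its subspace $Z$ to be $Y_2$. Here $I-P$ is a projection of $X$ onto $X_2$, and the assumption that $(I-P)|_{Y_2}\colon Y_2\to X_2$ is invertible is exactly the ``$P|_Z$ invertible'' clause of that lemma. The lemma then hands us
\[
R:=\big((I-P)|_{Y_2}\big)^{-1}(I-P),
\]
a projection of $X$ onto $Y_2$. Boundedness is automatic: $(I-P)|_{Y_2}$ is a bounded bijection between the Banach spaces $Y_2$ and $X_2$, so its inverse is bounded by the bounded inverse theorem, and $R$ is a composition of bounded maps.

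It then remains only to identify $\ker R$. Since $\big((I-P)|_{Y_2}\big)^{-1}$ is injective, $Rx=0$ forces $(I-P)x=0$, i.e. $x\in\ker(I-P)$; and because $I-P$ is a projection onto $X_2$, its kernel is precisely the range $X_1$ of $P$. Hence $\ker R=X_1$ and $\operatorname{ran} R=Y_2$, giving $X=X_1\oplus Y_2$. The separation hypothesis $d(X_1,Y_2)>0$ enters here only as a consistency check: it independently forces $X_1\cap Y_2=\{0\}$, since a unit vector lying in both subspaces would make the defining infimum vanish, which matches the fact that a projection always has $\ker R\cap\operatorname{ran} R=\{0\}$.

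I do not expect a genuine obstacle: the statement reduces to applying Lemma \ref{babyproj} in the correct guise and reading off the kernel of the resulting projection. The one point deserving care is keeping the substitution $P\rightsquigarrow I-P$ straight, so that the invertibility of $(I-P)|_{Y_2}$ lines up with the lemma's hypothesis rather than with $P$ itself. It is worth noting that this argument never uses $Q$, $Y_1$, or the invertibility of $Q|_{X_1}$; those hypotheses presumably appear because all of them hold in the intended application. Indeed, $Q|_{X_1}$ invertible alone already yields the algebraic splitting $X=X_1+Y_2$ with $X_1\cap Y_2=\{0\}$ by the symmetric one-line argument (write $x=u+(x-u)$ with $u\in X_1$ chosen so that $Qu=Qx$, whence $x-u\in\ker Q=Y_2$), but they are not needed for the conclusion as stated.
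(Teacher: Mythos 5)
Your proof is correct, and it takes a genuinely leaner route than the paper's. Both arguments begin the same way, applying Lemma \ref{babyproj} to $I-P$ and $Y_2$ to obtain the bounded projection $R=\big((I-P)|_{Y_2}\big)^{-1}(I-P)$ of $X$ onto $Y_2$. The paper then also forms the second projection $(Q|_{X_1})^{-1}Q$ onto $X_1$, checks that the two compositions vanish in both orders so that their sum is a projection onto $X_1+Y_2$, and finally uses $d(X_1,Y_2)>0$ together with the invertibility of $Q|_{X_1}$ to prove that this sum is injective, hence equal to the identity. You bypass all of that by identifying the kernel of the single projection $R$: since $\big((I-P)|_{Y_2}\big)^{-1}$ is injective on $X_2$, $Rx=0$ forces $(I-P)x=0$, and $\ker(I-P)=X_1$ because $P$ is idempotent with range $X_1$; thus $X=\ker R\oplus\operatorname{ran}R=X_1\oplus Y_2$. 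This is complete, and it exposes something the paper's proof does not: the hypotheses involving $Q$, $Y_1$, the invertibility of $Q|_{X_1}$, and $d(X_1,Y_2)>0$ are redundant for the stated conclusion, since invertibility of $(I-P)|_{Y_2}$ alone suffices (and, as your closing remark correctly notes, invertibility of $Q|_{X_1}$ alone would do just as well by symmetry). What the paper's heavier argument buys is that it establishes along the way the operator identity $(Q|_{X_1})^{-1}Q+\big((I-P)|_{Y_2}\big)^{-1}(I-P)=I$, which is precisely the form in which the lemma is invoked later, in the proof of Theorem \ref{T:unc} (the equations justified there ``by Lemma \ref{projections}''). Your argument recovers that identity as well, but only after a one-line supplement: $I-R$ and $(Q|_{X_1})^{-1}Q$ are both projections with range $X_1$ and kernel $Y_2$, and a projection is determined by its range and kernel.
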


\begin{proof}
By Lemma \ref{babyproj}, we have that $(Q|_{X_1})^{-1} Q$ is a projection onto $X_1$ and
$\big( (I-P)|_{Y_2} \big)^{-1} (I-P)$ is a projection onto $Y_2$.  Furthermore, both
\[
(Q|_{X_1})^{-1} Q \big( (I-P)|_{Y_2} \big)^{-1} (I-P) = 0 \quad \mbox{and} \quad \big( (I-P)|_{Y_2} \big)^{-1} (I-P)(Q|_{X_1})^{-1} Q = 0.
\]
Thus, the operator $R:X \to X$ defined by
\[
R := (Q|_{X_1})^{-1} Q + \big( (I-P)|_{Y_2} \big)^{-1} (I-P)
\]
is a projection from $X$ onto $X_1 \oplus Y_2$. We will show that the projection $R$ is one-to-one (and hence the identity map) to prove that $X = X_1\oplus Y_2$.
Let $x \in X$ be so that $Rx = 0$. Then
\[\|Rx\| = \|(Q|_{X_1})^{-1} Q x+ \big( (I-P)|_{Y_2} \big)^{-1} (I-P)x\|=0
\]
and since $d(X_1,Y_2) > 0$, it follows that
\[
(Q|_{X_1})^{-1} Q x  =  0 \quad \mbox{and} \quad \big( (I-P)|_{Y_2} \big)^{-1} (I-P)x = 0.
\]
Thus, $Qx = 0 = (I-P)x$ so that $x = Px = (I-Q)x$. However, since $(Q|_{X_1})^{-1} Q$ is a projection onto $X_1$, we have $Px = (Q|_{X_1})^{-1} Q Px = (Q|_{X_1})^{-1} Q x = 0$ and therefore $x = 0$.
\end{proof}

Given a sequence $(x_i)_{i\in\N}$ in a Banach space $X$ and a set $\Gamma\subseteq\N$, we let $[x_i]_{i\in\Gamma}$ denote the subspace $\overline{\spa}\{x_i\}_{i\in\Gamma} $.

\begin{thm}\label{T:unc}
Let $(x_i^0)_{i = 1}^\infty$ and $(x_i^1)_{i = 1}^\infty$ be two $C$-unconditional bases for a Banach space $X$ with biorthogonal functionals $(x_i^{0*})_{i = 1}^\infty$ and $(x_i^{1*})_{i = 1}^\infty$, respectively. The following are equivalent:
\begin{enumerate}[(i)]
\item For every $\sigma \in \{0,1\}^\N$, $(x_i^{\sigma(i)})_{i = 1}^\infty$ is an unconditional basis for $X$.
\item There is a $K > 0$ so that for any $\sigma \in \{0,1\}^\N$, $(x_i^{\sigma(i)})_{i = 1}^\infty$ is a $K$-unconditional basis for $X$.

\item For every $\sigma \in  \{0,1\}^\mathbb{N}$, $(x_i^{\sigma(i)}, x_i^{\sigma(i)*})_{i = 1}^\infty$ is an unconditional approximate Schauder frame.

\item For every $\sigma \in \{0,1\}^\mathbb{N}$, $(x_i^{\sigma(i)})_{i = 1}^\infty$ is an unconditional basic sequence.

\item There exists a $D > 0$ so that for every $\sigma \in \{0,1\}^\mathbb{N}$,
\[
d\left([x_i^0]_{i \in \sigma^{-1}(0)}, [x_i^1]_{i \in \sigma^{-1}(1)}\right) \geq D^{-1}.
\]

\item There exists $E>0$ such that for any $\sigma \in  \{0,1\}^\mathbb{N}$, if $P$ is the basis projection of $(x^0_i)_{i=1}^\infty$ onto $[x_i^0]_{i \in \sigma^{-1}(0)}$ and $Q$ is the basis projection of $(x^1_i)_{i=1}^\infty$ onto $[x_i^1]_{i \in \sigma^{-1}(0)}$  then $$P|_{[x_i^1]_{i \in \sigma^{-1}(0)}}:{[x_i^1]_{i \in \sigma^{-1}(0)}}\!\!\rightarrow {[x_i^0]_{i \in \sigma^{-1}(0)}}\quad\textrm{and}\quad Q|_{[x_i^0]_{i \in \sigma^{-1}(0)}}:{[x_i^0]_{i \in \sigma^{-1}(0)}}\!\!\rightarrow {[x_i^1]_{i \in \sigma^{-1}(0)}}$$
are invertible with inverses having norm at most $E$.


\end{enumerate}
\end{thm}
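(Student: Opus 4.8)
The plan is to prove Theorem~\ref{T:unc} as a cycle of implications, taking advantage of the fact that $(ii)$, $(iii)$, and $(iv)$ follow from each other via the uniformity results and unconditionality relationships already established, and that $(v)$ and $(vi)$ are the geometric reformulations that make the hard direction tractable. First I would observe that $(i)\Leftrightarrow(ii)$ is immediate from Proposition~\ref{P:basisConstant}, and $(i)\Rightarrow(iv)$ is trivial, so the substance lies in recovering a \emph{basis} (not merely a basic sequence) from the various uniform bounds. The natural loop is $(ii)\Rightarrow(v)\Rightarrow(vi)\Rightarrow(i)$, together with the frame equivalences $(ii)\Leftrightarrow(iii)$ via Theorem~\ref{T:weaklyWeaving} and the bridge $(iv)\Rightarrow(ii)$. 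The role of $(iv)\Rightarrow(ii)$ is the crux of avoiding the pathology in Example~\ref{Ex:subspace}: knowing every weaving is an unconditional basic sequence must be upgraded to every weaving having dense span.

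For $(ii)\Rightarrow(v)$, I would argue contrapositively: if the distances $d([x_i^0]_{i\in\sigma^{-1}(0)},[x_i^1]_{i\in\sigma^{-1}(1)})$ are not uniformly bounded below, then for some $\sigma$ and unit vectors $u\in[x_i^0]_{i\in\sigma^{-1}(0)}$, $v\in[x_i^1]_{i\in\sigma^{-1}(1)}$ we have $\|u-v\|$ arbitrarily small. Writing $u=\sum_{i\in\sigma^{-1}(0)}a_ix_i^0$ and $v=\sum_{i\in\sigma^{-1}(1)}b_ix_i^1$, the element $u-v$ is a genuine weaving combination (its coordinates on $\sigma^{-1}(0)$ come from the $0$-basis and on $\sigma^{-1}(1)$ from the $1$-basis), and its smallness while $u$ has norm one contradicts the $K$-unconditionality of the weaving $(x_i^{\sigma(i)})$, since a suppression applied to $u-v$ recovers $u$ up to the constant $K$ but $\|u-v\|$ is too small to support $\|u\|=1$. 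Then $(v)\Rightarrow(vi)$ uses the positive distance together with the uniform basis constants: the projections $P$ and $Q$ exist with norms controlled by $C$, and the positivity of the distance forces the restricted maps $P|_{[x_i^1]}$ and $Q|_{[x_i^0]}$ to be bounded below and hence invertible with uniformly bounded inverses, which is precisely where Lemma~\ref{babyproj} and the decomposition machinery enter.

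The implication $(vi)\Rightarrow(i)$ is where Lemma~\ref{projections} does its work. Fix $\sigma$ and set $X_1=[x_i^0]_{i\in\sigma^{-1}(0)}$, $X_2=[x_i^0]_{i\in\sigma^{-1}(1)}$, $Y_1=[x_i^1]_{i\in\sigma^{-1}(0)}$, $Y_2=[x_i^1]_{i\in\sigma^{-1}(1)}$, so that $X=X_1\oplus X_2=Y_1\oplus Y_2$ from the two original unconditional bases; the hypotheses of Lemma~\ref{projections} are exactly the invertibility statements in $(vi)$ plus the positive-distance condition (which $(vi)$ supplies, or which we carry from $(v)$). The lemma yields $X=X_1\oplus Y_2=[x_i^0]_{i\in\sigma^{-1}(0)}\oplus[x_i^1]_{i\in\sigma^{-1}(1)}$, which says precisely that the weaving $(x_i^{\sigma(i)})$ spans $X$; combined with its being an unconditional basic sequence, it is an unconditional basis. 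The main obstacle I anticipate is this density step: being a basic sequence only guarantees a basis for the closed span, and Example~\ref{Ex:subspace} shows the span can be proper when conditionality is allowed, so the entire value of the unconditional hypothesis is funneled through $(v)$ and $(vi)$ to guarantee $X_1\oplus Y_2$ exhausts $X$.

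Finally I would close the frame side: $(ii)\Leftrightarrow(iii)$ follows because an unconditional basis $(x_i^{\sigma(i)})$ with its biorthogonal functionals is tautologically a Schauder frame whose frame operator is the identity, so uniform $K$-unconditionality of the bases corresponds to uniform unconditionality of the associated approximate Schauder frames, and the furthermore clause of Theorem~\ref{T:weaklyWeaving} converts pointwise unconditionality of all weavings into a uniform constant. The remaining arrow $(iv)\Rightarrow(ii)$ is handled by Proposition~\ref{P:basisConstant}, which already upgrades ``every weaving is an unconditional basic sequence'' to a uniform unconditionality constant $D$; feeding this $D$ into the $(ii)\Rightarrow(v)\Rightarrow(vi)\Rightarrow(i)$ chain (now run from the basic-sequence hypothesis) then delivers that each weaving is in fact a basis, closing the equivalence and ruling out the Example~\ref{Ex:subspace} behavior precisely because all weavings are unconditional.
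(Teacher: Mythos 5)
Your skeleton follows the paper's cycle quite closely (the paper proves (i)$\Rightarrow$(ii)$\Rightarrow$(iv)$\Rightarrow$(v)$\Rightarrow$(vi)$\Rightarrow$(i) together with (vi)$\Leftrightarrow$(iii)), and your (ii)/(iv)$\Rightarrow$(v) argument is essentially the paper's. But there is a genuine gap exactly at (v)$\Rightarrow$(vi), and it is the technical heart of the whole theorem. You assert that positivity of the distance forces $P|_{[x_i^1]_{i \in \sigma^{-1}(0)}}$ and $Q|_{[x_i^0]_{i \in \sigma^{-1}(0)}}$ to be ``bounded below and hence invertible.'' Bounded below gives injectivity and closed range, \emph{not} surjectivity; Lemma~\ref{babyproj} cannot supply surjectivity either, since it \emph{assumes} the restricted projection is invertible. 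The paper's proof of (v)$\Rightarrow$(vi) devotes more than a page precisely to showing that $P_{\sigma^{-1}(0)}\big([x_i^1]_{i\in\sigma^{-1}(0)}\big)$ is dense in $[x_i^0]_{i\in\sigma^{-1}(0)}$: one approximates $y$ by images of vectors $z_n$ built from finitely modified weavings $\sigma_n$, extracts limits $w_n$, forms block sequences $y_n, v_n$, and uses alternating-sign averages to produce a set $\Omega$ violating the uniform distance bound in (v). Remember that the cycle must be run from the basic-sequence hypothesis (iv) (your ``bridge'' (iv)$\Rightarrow$(ii) is exactly this chain, since Proposition~\ref{P:basisConstant} only yields uniform constants, never spanning), so at this stage you may not assume any weaving spans $X$ — which is what would make surjectivity easy. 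Example~\ref{Ex:subspace} shows the density claim is not a formality, so without this argument the chain never upgrades basic sequences to bases.

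A second, independent error is your treatment of (iii). You claim (ii)$\Leftrightarrow$(iii) is essentially tautological because ``an unconditional basis with its biorthogonal functionals is a Schauder frame whose frame operator is the identity.'' But in condition (iii) the functionals $x_i^{\sigma(i)*}$ are the biorthogonal functionals of the two \emph{original} bases, not of the weaving; in general $x_i^{0*}(x_j^1)\neq 0$ for $i\in\sigma^{-1}(0)$, $j\in\sigma^{-1}(1)$, so the pair $(x_i^{\sigma(i)}, x_i^{\sigma(i)*})_{i=1}^\infty$ is not even a biorthogonal system, and its frame operator is $S=P+(I-Q)$, not the identity. This is precisely the distinction the paper stresses via Example~\ref{E:conditional weaving}. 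Consequently Theorem~\ref{T:weaklyWeaving} (which only converts pointwise frame bounds into uniform ones) cannot bridge bases and frames for you. The paper instead proves (vi)$\Rightarrow$(iii) by exhibiting the explicit inverse $T=P|^{-1}_{Y_1}Q|^{-1}_{X_1}Q+(I-Q)|^{-1}_{X_2}(I-P)|^{-1}_{Y_2}(I-P)$ of $S$ using Lemma~\ref{projections}, and proves (iii)$\Rightarrow$(vi) by a separate argument showing the restricted projections are onto with inverses bounded by the uniform frame constant. Both of these arguments are missing from your proposal.
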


\begin{proof}

Note that (i)$\Rightarrow$(ii) follows from Proposition \ref{P:basisConstant}, and that (ii)$\Rightarrow$(iv) is immediate.  We will prove
(iv)$\Rightarrow$(v)$\Rightarrow$(vi)$\Rightarrow$(i) and
(iii)$\Rightarrow$(vi)$\Rightarrow$(iii).

The case (iv)$\Rightarrow$(v) is well known, but we include a short proof here for completeness.  By Proposition \ref{P:basisConstant} there exists $K>0$ such that $(x^{\sigma(i)}_i)_{i\in\N}$ is $K$-suppression unconditional for all $\sigma\in\{0,1\}^\N$.  Let $\sigma\in\{0,1\}^\N$, $x\in[x^0_i]_{i\in\sigma^{-1}(0)}$, and $y\in[x^1_i]_{i\in\sigma^{-1}(1)}$.  We have that $\|x-y\|\geq\frac{1}{K}\|x\|$, and hence $d\big([x_i^0]_{i \in \sigma^{-1}(0)}, [x_i^1]_{i \in \sigma^{-1}(1)}\big) \geq \frac{1}{K}$.

For (v)$\Rightarrow$(vi),  we assume that (v) holds for some $D>0$.
Let $\sigma\in\{0,1\}^\N$.  For $\Gamma\subseteq\N$ we let
$P_\Gamma$ be the basis projection of $(x^0_i)_{i=1}^\infty$ onto
$[x_i^0]_{i \in \Gamma}$.   Note that this implies that $I-P_{\Gamma}=P_{\Gamma^c}$ is the basis projection  of $(x^0_i)_{i=1}^\infty$ onto
$[x_i^0]_{i \in \Gamma^c}$.  Let $x\in[x_i^1]_{i \in \sigma^{-1}(0)}$
with $\|x\|=1$. Then
\begin{equation}\label{E:Projectionx}
\left\|P_{\sigma^{-1}(0)}(x)\right\|=\left\|x-(x-P_{ \sigma^{-1}(0)}(x))\right\|\geq
d\left([x_i^1]_{i \in \sigma^{-1}(0)}, [x_i^0]_{i \in \sigma^{-1}(1)}\right)
\geq D^{-1}.
\end{equation}
Thus, we have that $P_{\sigma^{-1}(0)}$ is an isomorphism of
$[x_i^1]_{i \in \sigma^{-1}(0)}$ onto a closed subspace of
$[x_i^0]_{\sigma^{-1}(0)}$ with inverse having norm at most $E := D$.  It remains to prove that
$P_{\sigma^{-1}(0)}\left([x_i^1]_{\sigma^{-1}(0)}\right)$ is dense in
$[x_i^0]_{i \in \sigma^{-1}(0)}$.

Let $y\in \spa(x_i^0)_{i \in \sigma^{-1}(0)}$
with $\|y\|=1$.  As
$(x_i^0)_{i=1}^\infty$ and $(x^1_i)_{i=1}^\infty$ are unconditional
bases there exists $N\in\N$ such that $y\in\spa_{1\leq i\leq
N}x^0_i$
and for all $\Gamma\subset(N,\infty)$ we have that
$\big\|\sum_{i\in \Gamma}x^{1*}_i(y)x^1_i\big\|<\vp$. For $n\geq N$, we let
$\sigma_n\in\{0,1\}^\N$ be defined by
$\sigma_n|_{[1,n)}=\sigma|_{[1,n)}$ and $\sigma_n(i)=1$ for $i\geq
n$.  A simple induction argument gives that
$(x^{\sigma_n(i)}_i)_{i=1}^\infty$ is an unconditional basis for $X$
as we have changed only finitely many of the coordinates of
$(x^1_i)_{i=1}^\infty$  and $d\big([x_i^0]_{i \in \sigma_n^{-1}(0)},
[x_i^1]_{i \in \sigma_n^{-1}(1)}\big) >0$.  Furthermore,
$P_{\sigma_n^{-1}(0)}$ is an  isomorphism of
$[x^1_i]_{\sigma_n^{-1}(0)}$ onto $[x^0_i]_{\sigma_n^{-1}(0)}$ because these spaces are finite dimensional and have the same dimension.
Hence, for each $n\geq N$ there exists unique
$z_n\in \spa(x^1_i)_{i\in\sigma_n^{-1}(0)}$ such that $y=P_{\sigma_n^{-1}(0)}(z_n)=\sum_{i\in\sigma_n^{-1}(0)}x^{0*}_i(z_n) x^0_i$ and $\|z_n\|\leq D$ by \eqref{E:Projectionx}.
Therefore,
\begin{equation}\label{E:tail}
P_{\sigma^{-1}(0)}z_n-y=\sum_{i\in\sigma^{-1}(0)}
x^{0*}_i(z_n)x^{0}_i- \sum_{i\in\sigma_n^{-1}(0)}x^{0*}_i(z_n) x^0_i=\sum_{i\in\sigma^{-1}(0)\cap(n,\infty)}x^{0*}_i(z_n)x^{0}_i.
\end{equation}

If $(P_{\sigma^{-1}(0)}z_n)_{n=1}^\infty$ has a sequence of convex
combinations which converges to $y$ then we can conclude that $y\in
P_{\sigma^{-1}(0)}([x^1_i]_{i\in\sigma^{-1}(0)})$ as the set is
closed and convex. We assume by contradiction that this is not the case. Since any convex combination of $y$ is itself, there exists $F>0$ such that for all $n \geq N$,
\begin{equation}\label{E:big tail}
\left\|\sum_{i\in\sigma^{-1}(0)\cap(n,\infty)}\sum_{j=n}^\infty
x^{0*}_i(a_j z_j)x^{0}_i\right\|>2F \textrm{ for all
}(a_j)_{j=n}^\infty\in [0,1]^\N\textrm{ with }\sum_{j=
n}^\infty a_j=1.
\end{equation}

Let  $\vp_n\searrow0$ such that $\sum_{n=1}^\infty n\vp_n<1$ and $\sum_{n=1}^\infty \vp_n<F$.  We inductively choose a subsequence $(M_n)_{n=-1}^\infty$ of
$\N$ with $M_{-1}=1$ and $M_0=N+1$ and sequences $(y_n)_{n=1}^\infty,(w_n)_{n=0}^\infty\subseteq X$ such that
\begin{enumerate}
\item[(a)]  $\big\|\sum_{i\in\sigma^{-1}(0)\cap[M_{n+1},\infty)}x^{0*}_i(z_{M_j})x^0_i\big\|<\vp_{n+1}$ for all $1\leq j\leq n$ and $n\in \N$.
\item[(b)] $y_n:=\sum_{i\in\sigma^{-1}(0)\cap[M_{n},M_{n+1})}x^{0*}_i(z_{M_n})x^0_i$ for all $n \in \mathbb{N}$ and $\big\|\sum _{j\in\N} a_ j y_j\big\|>F$ for all $(a_j)_{j\in\N}\in[0,1]^\mathbb{N}$ with $\sum_{j\in\N} a_j=1$.

\item[(c)]$w_n:=\lim\limits_{j\rightarrow\infty}\sum_{i\in\sigma^{-1}(0)\cap[M_{n-1},M_n)}x^{1*}_i(z_{M_j})x^1_i$ for all $n\geq0$.
\item [(d)]
$\big\|w_n-\sum_{i\in\sigma^{-1}(0)\cap[M_{n-1},M_{n})}x^{1*}_i(z_{M_j})x^1_i\big\|<\vp_{j}/j$ for all $j> n\geq0$.
\end{enumerate}
Note that (a) can be achieved because  the series converges and we
are only working with finitely many $z_j$ at each step. We have that
(b) follows from \eqref{E:big tail}, (a), and
$\sum_{n=1}^\infty\vp_n<F$.  The definition of $w_n$ in (c) can be
made because the sum is over the finite interval $[M_{n-1}, M_n)$
and the sequence $(z_{M_j})_{j =n+1}^\infty$ is bounded, so we may
pass to a subsequence of $(M_j)_{j \in \mathbb{N}}$ in which the
limit exists. Finally, since the limit exists by (c), we can pass to
yet another subsequence of $(M_j)_{j \in \mathbb{N}}$ to obtain (d).

For all $n\in\N$, we set $v_n:=\sum_{i\in\sigma^{-1}(0)\cap
[M_{n-1},M_n)}x^{1*}_i(z_{M_n})x^1_i$ and obtain the following:
\begin{enumerate}
\item[(e)] $\big\|z_{M_n}-(v_n+\sum_{j=0}^{n-1} w_j)\big\|< \vp_n$ for all $n\in\N$. 

\item[(f)] $\big\|P_{\sigma^{-1}(0)}z_{M_n}-(y+y_n)\big\|<\vp_{n+1}$ for all $n \in \mathbb{N}$. 
\end{enumerate}

We have that (e) follows from (d) and the fact that $z_{M_n}\in[x_j^1]_{j=1}^{M_n-1}$.  We have that (f) follows from (a), (b), and \eqref{E:Projectionx}.

Let $M\in\N$ and $\Omega= \sigma^{-1}(0)\cap \cup_{n=1}^{2M}[M_{2n-1},M_{2n})$.
Then
\begin{equation}\label{E:support}
\frac{1}{2M}\sum_{n=1}^{2M} (-1)^n v_{2n}\in [x^1_i]_{i\in\Omega} \quad \mbox{and} \quad \frac{1}{2M}\sum_{n=1}^{2M} (-1)^n ( P_{\sigma^{-1}(1) }z_{M_{2n}}+ y_{2n})\in [x^0_i]_{i\in\N\setminus\Omega}.
\end{equation}
Since $(y_n)_{n=N}^\infty$ is a block sequence of $(x^0_i)_{i\in\sigma^{-1}(0)}$, it is also $C$-unconditional and therefore by (b) we have that
\begin{equation}\label{E:lowerAvg}
\left\|\dfrac{1}{2M}\sum_{n=1}^{2M} (-1)^n \left( P_{\sigma^{-1}(1)} z_{M_{2n}}+  y_{2n}\right)\right\|
\geq C^{-1}\left\|\dfrac{1}{2M}\sum_{n=1}^{2M} y_{2n}\right\|
>
 C^{-1}F.
\end{equation}
As $(w_i)_{ i=0}^\infty$ is a block sequence of $(x^1_i)_{i=1}^\infty$, we have
 for all $n\in\N$ and $\Gamma\subseteq\{0,1,...,n\}$ that
\begin{equation}\label{E:sumwi}
\left\|\sum_{i\in\Gamma} w_i\right\|\leq C \left\|\sum_{i=1}^n w_i\right\|=C\left\|\lim_{j\rightarrow\infty}\sum_{i\in\sigma^{-1}(0)\cap[1,M_n)}x^{1*}_i(z_{M_j})x^1_i\right\|\leq \limsup_{j\rightarrow\infty}C^2\|z_{M_j}\|\leq C^2D.
\end{equation}
Thus, we have the following estimate:
\begin{align*}
&\left\|\frac{1}{2M}\sum_{n=1}^{2M}\right.  (-1)^n \left. (P_{\sigma^{-1}(1)} z_{M_{2n}}+y_{2n})-\frac{1}{2M}\sum_{n=1}^{2M} (-1)^n v_{2n}\right\|\\ &=
\frac{1}{2M}\left\|\sum_{n=1}^{2M} (-1)^n (P_{\sigma^{-1}(1)} z_{M_{2n}}+y_{2n}-v_{2n})\right\|\\
&\leq \frac{1}{2M}\left\|\sum_{n=1}^{2M} (-1)^n (y+y_{2n}+P_{\sigma^{-1}(1)} z_{M_{2n}}- (v_{2n}+\sum_{j=0}^{2n-1} w_j) )\right\|+\frac{1}{2M} \left\|\sum_{j=1}^{M}w_{4j-2}+w_{4j-1}\right\| \\
&\leq  \frac{1}{2M}\sum_{n=1}^{2M}  \left\|y+y_{2n}+P_{\sigma^{-1}(1)} z_{M_{2n}}- (v_{2n}+\sum_{j=0}^{2n-1} w_j )\right\|+\frac{C^2 D }{2M}\quad\textrm{ by }\eqref{E:sumwi} \\
&< \frac{1}{2M}\sum_{n=1}^{2M}  \left\|P_{\sigma^{-1}(0)}z_{M_{2n}}\!+\!P_{\sigma^{-1}(1)} z_{M_{2n}}\!-\! z_{M_{2n}}\right\|+\frac{1}{2M}\sum_{n=1}^{2M}(\vp_{2n}+\vp_{2n+1})\!+\!\frac{C^2 D }{2M}\textrm{ by (e) and (f)} \\
&< \frac{1}{2M}+\frac{C^2 D }{2M}.
\end{align*}
The above estimate together with \eqref{E:support} and \eqref{E:lowerAvg} gives that
$d([x^0_i]_{i\in\N\setminus\Omega},[x^1_i]_{i\in\Omega})<CF^{-1}(1+C^2 D)(2M)^{-1}$.  This contradicts (v) by choosing $M$ large enough. An identical proof shows the claim for $Q$.

For (vi)$\Rightarrow$(i), we assume that (vi) holds for some $E>0$.
Let $\sigma\in\{0,1\}^\N$.  For $j\in\sigma^{-1}(0)$ we let
$f_j:=\big(Q|_{[x^0_i]_{i\in\sigma^{-1}(0)}}^{-1}Q\big)^{*}x^{0*}_j$ and for
$j\in\sigma^{-1}(1)$ we let
$f_j:=\big((I-P)|_{[x^1_i]_{i\in\sigma^{-1}(1)}}^{-1}(I-P)\big)^{*}x^{1*}_j$.
We first prove that $(x^{\sigma(i)}_i,f_i)_{i=1}^\infty$ is a
biorthogonal system.  Let $j,k\in\sigma^{-1}(0)$, then
$$f_j(x^0_k)=\left(\big(Q|_{[x^0_i]_{i\in\sigma^{-1}(0)}}^{-1}Q\big)^{*}x^{0*}_j\right)(x_k^0)=x^{0*}_j\left(\big(Q|_{[x^0_i]_{i\in\sigma^{-1}(0)}}^{-1}Q\big)x^0_k\right)=x^{0*}_j(x^0_k)=\delta_{j,k}$$
For $j\in \sigma^{-1}(0)$ and $k\in\sigma^{-1}(1)$,
$$f_j(x^1_k)=\left(\big(Q|_{[x^0_i]_{i\in\sigma^{-1}(0)}}^{-1}Q\big)^{*}x^{0*}_j\right)(x_k^1)=x^{0*}_j\left(\big(Q|_{[x^0_i]_{i\in\sigma^{-1}(0)}}^{-1}Q\big)x^1_k\right)=x^{0*}_j(0)=0$$
Thus, if $j\in\sigma^{-1}(0)$ and $k\in\N$, then
$f_j(x^{\sigma(k)}_k)=\delta_{j,k}$. Likewise, if
$j\in\sigma^{-1}(1)$ and $k\in\N$, then
$f_j(x^{\sigma(k)}_k)=\delta_{j,k}$.  Thus, $(f_i)_{i=1}^\infty$ are
biorthogonal functionals to $(x_i^{\sigma(i)})_{i=1}^\infty$.  Let
$\Gamma\subseteq\N$ and $x\in X$.
\begin{align*}
\left\|\sum_{i\in\Gamma}f_i(x)x^{\sigma(i)}_i\right\|&\leq
\left\|\sum_{i\in\Gamma\cap\sigma^{-1}(0)}\!\!\!\!\!\!(\!Q|_{[x^0_i]_{i\in\sigma^{-1}(0)}}^{-1}\!\!\!\!\!\!Q)^*x^{0*}_i(x)x^{0}_i\right\|
\!+\!
\left\|\sum_{i\in\Gamma\cap\sigma^{-1}(1)}\!\!\!\!\!\!(\!(I-P)|_{[x^0_i]_{i\in\sigma^{-1}(0)}}^{-1}\!\!\!\!\!\!\!\!(I-P))^*x^{1*}_i(x)x^{1}_i\right\|\\
&
\leq C\left\|\sum_{i\in\N}x^{0*}_i(Q|_{[x^0_i]_{i\in\sigma^{-1}(0)}}^{-1}\!\!\!\!Q x)x^{0}_i\right\|
+
C\left\|\sum_{i\in\N}x^{1*}_i((I-P)|_{[x^0_i]_{i\in\sigma^{-1}(0)}}^{-1}\!\!\!\!\!\!(I-P)x)x^{1}_i\right\|\\
&
= C\left\|Q|_{[x^0_i]_{i\in\sigma^{-1}(0)}}^{-1}\!\!Q x \right\|
+
C\left\|(I-P)|_{[x^0_i]_{i\in\sigma^{-1}(0)}}^{-1}\!\!\!\!\!\!(I-P) x\right\|\\
&\leq C\left\|Q|_{[x^0_i]_{i\in\sigma^{-1}(0)}}^{-1}\!\!Q \right\|\|x\|
+
C\left\|(I-P)|_{[x^0_i]_{i\in\sigma^{-1}(0)}}^{-1}\!\!\!\!\!\!(I-P) \right\|\|x\|\\
&\leq 2CE\|x\|
\end{align*}
Thus, $(x^{\sigma(i)})_{i=1}^\infty$ is an unconditional basic
sequence.  By Lemma \ref{projections}, we have that
$[x^{\sigma(i)}_i]_{i\in\N}=X$ and hence
$(x^{\sigma(i)})_{i=1}^\infty$ is an unconditional basis of $X$.

For (vi)$\Rightarrow$(iii), we assume that (vi) holds for some $E>0$.  Let $\sigma\in\{0,1\}^\N$.  We let   $P$ be the basis projection of $(x^0_i)_{i=1}^\infty$ onto $X_1:=[x_i^0]_{i \in \sigma^{-1}(0)}$ and $Q$ be the basis projection of $(x^1_i)_{i=1}^\infty$ onto $Y_1:=[x_i^1]_{i \in \sigma^{-1}(0)}$.  Note that $(I-P)$ is the
basis projection of $(x^0_i)_{i=1}^\infty$ onto $X_2:=[x_i^0]_{i \in \sigma^{-1}(1)}$ and $(I-Q)$ is the basis projection of $(x^1_i)_{i=1}^\infty$ onto $Y_2:=[x_i^1]_{i \in \sigma^{-1}(1)}$.  We have that $P|_{Y_1}$, $Q|_{X_1}$, $(I-P)|_{Y_2}$, and $(I-Q)|_{Y_1}$ are all invertible with bounded inverses.  We let $S$ be the frame operator of $(x^{\sigma(i)}_i,x^{\sigma(i)*}_i)_{i=1}^\infty$.  Then for all $x\in X$, we have that
$$
S(x)=\sum_{i=1}^\infty x^{\sigma(i)*}_i(x)x^{\sigma(i)}_i=\sum_{i\in\sigma^{-1}(0)}x^{0}_i(x)x^{0}_i
+\sum_{i\in\sigma^{-1}(1)}x^{1}_i(x)x^{1}_i=P(x)+(I-Q)(x).
$$
Thus, $S=P+(I-Q)$ is the frame operator of
$(x^{\sigma(i)}_i,x^{\sigma(i)*}_i)_{i=1}^\infty$, and hence is well
defined and bounded.  We consider the operator
$T=P|^{-1}_{Y_1}Q|^{-1}_{X_1}Q+(I-Q)|^{-1}_{X_2}(I-P)|^{-1}_{Y_2}(I-P)$
and will prove that $T$ is the inverse of $S$ and hence
$(x^{\sigma(i)}_i,x^{\sigma(i)*}_i)_{i=1}^\infty$ would be an
approximate Schauder frame.  We have that
\begin{align*}
ST&=(P+(I-Q))P|^{-1}_{Y_1}Q|^{-1}_{X1}Q+(P+(I-Q))(I-Q)|^{-1}_{X_2}(I-P)|^{-1}_{Y_2}(I-P)\\
&=PP|^{-1}_{Y_1}Q|^{-1}_{X1}Q+(I-Q)(I-Q)|^{-1}_{X_2}(I-P)|^{-1}_{Y_2}(I-P)\\
&=Q|^{-1}_{X_1}Q+(I-P)|^{-1}_{Y_2}(I-P)=I.
\end{align*}
where the second equation follows from $(I-Q)P|^{-1}_{Y_1}=0=P(I-Q)|^{-1}_{X_2}$ and the last equation by Lemma \ref{projections}. Thus, $ST=I$. To show $TS=I$, we have
\begin{align*}
TS&=P|^{-1}_{Y_1}Q^{-1}_{X1}Q(P+(I-Q))+(I-Q)|^{-1}_{X_2}(I-P)|^{-1}_{Y_2}(I-P)(P+(I-Q))\\
&=P|^{-1}_{Y_1}Q^{-1}_{X1}QP+(I-Q)|^{-1}_{X_2}(I-P)|^{-1}_{Y_2}(I-P)(I-Q)\\
&=P^{-1}|_{Y_1}P+(I-Q)|^{-1}_{X_2}(I-Q)=I
\end{align*}
where the last equation follows from Lemma \ref{projections}. Hence, we have that $T$ is the inverse of $S$ and $(x^{\sigma(i)}_i,x^{\sigma(i)*}_i)_{i=1}^\infty$ is an approximate Schauder frame.  We now check that $(x^{\sigma(i)}_i,x^{\sigma(i)*}_i)_{i=1}^\infty$ is an unconditonal approximate Schauder frame.  Let $x\in X$ and $\Gamma\subseteq\N$.  We have that
\begin{align*}
\left\|\sum_{i\in\Gamma}x^{\sigma(i)*}_i(x)x^{\sigma(i)}_i\right\|&\leq\left\|\sum_{i\in\Gamma\cap\sigma^{-1}(0)}x^{0*}_i(x)x^{0}_i\right\|+ \left\|\sum_{i\in\Gamma\cap\sigma^{-1}(1)}x^{1*}_i(x)x^{1}_i\right\|\\
&\leq C\left\|\sum_{i\in\N}x^{0*}_i(x)x^{0}_i\right\|+ C\left\|\sum_{i\in\N}x^{1*}_i(x)x^{1}_i\right\|\\
&=2C\|x\|.
\end{align*}
Thus, $(x^{\sigma(i)}_i,x^{\sigma(i)*}_i)_{i=1}^\infty$ is a $2C$-unconditional approximate Schauder frame.

For (iii)$\Rightarrow$(vi), we assume that for every
$\phi\in\{0,1\}^\N$, we have that
$(x^{\phi(i)}_i,x^{\phi(i)*}_i)_{i=1}^\infty$ is an unconditional
approximate Schauder frame. By Theorem \ref{T:weaklyWeaving} there
exists $E\geq 1$ such that for every $\phi\in\{0,1\}^\N$, we have
that $(x^{\phi(i)}_i,x^{\phi(i)*}_i)_{i=1}^\infty$ is a
$E$-unconditional approximate Schauder frame whose frame operator
$S$ satisfies $\|S\|,\|S^{-1}\|\leq E$.  Let $\sigma\in\{0,1\}^\N$
and $y\in [x^0_i]_{i\in \sigma^{-1}(0)}$.  We let $P$ be the basis
projection of $(x^0_i)_{i=1}^\infty$ onto $[x_i^0]_{i \in
\sigma^{-1}(0)}$ and $Q$ be the basis projection of
$(x^1_i)_{i=1}^\infty$ onto $[x_i^1]_{i \in \sigma^{-1}(0)}$.  Thus,
the frame operator of
$(x^{\sigma(i)}_i,x^{\sigma(i)*}_i)_{i=1}^\infty$ is given by
$S=P+(I-Q)$.  There exists unique $x\in X$ with $\|x\|\leq E\|y\|$
such that $S(x)=y$.  We have that
$$y=S(x)=(P+(I-Q))(Q+(I-Q))x=PQx+P(I-Q)x+(I-Q)x
$$
For the sake of contradiction, we assume that $(I-Q)x\neq 0$.  As,
$y\in[x_i^0]_{i\in\sigma^{-1}(0)}$, we have that $(I-P)y=0$.  Hence,
$0=(I-P)y=(I-P)(I-Q)x$.    Thus, $0=(Q+(I-P))(I-Q)x$ and hence the
operator $(Q+(I-P))$ is not invertible.  This is a contradiction
because $(Q+(I-P))$ is the frame operator of
$(x^{1-\sigma(i)}_i,x^{(1-\sigma(i))*}_i)_{i=1}^\infty$.  Thus,
$(I-Q)x=0$ and $x=Q(x)\in[x^1_i]_{i\in\sigma^{-1}(0)}$.  This
implies $P|_{[x^1_i]_{i\in\sigma^{-1}(0)}}\!\!:{[x_i^1]_{i \in \sigma^{-1}(0)}}\!\!\rightarrow {[x_i^0]_{i \in \sigma^{-1}(0)}}$ is invertible and
$\|P|_{[x^1_i]_{i\in\sigma^{-1}(0)}}^{-1}\|\leq E$.  Likewise,
$Q|_{[x^0_i]_{i\in\sigma^{-1}(0)}}\!\!:{[x_i^0]_{i \in \sigma^{-1}(0)}}\!\!\rightarrow {[x_i^1]_{i \in \sigma^{-1}(0)}}$ is invertible and
$\|Q|_{[x^0_i]_{i\in\sigma^{-1}(0)}}^{-1}\|\leq E$.
\end{proof}






\section{Perturbations}\label{S:5}

Perturbation theorems are powerful tools in both creating new coordinate systems and in showing that coordinate systems are resilient to error.
We first recall the following classical perturbation theorem for Schauder bases which is often referred to as the small perturbation lemma (see Theorem 6.18 in \cite{FHHMPZ}).

\begin{thm}\label{T:basisPert}
Let $(x_j^0)_{j=1}^\infty$ be a Schauder basis for a Banach space $X$ with biorthogonal functionals $(x^{0*}_j)_{j=1}^\infty$.  If $(x^1_j)_{j=1}^\infty$ is a sequence in $X$ such that $\sum_{j=1}^\infty \|x_j^0-x^1_j\|\|x^{0*}_j\|<1$ then $(x^1_j)_{j=1}^\infty$ is a Schauder basis for $X$ and is equivalent to $(x^0_j)_{j=1}^\infty$.

\end{thm}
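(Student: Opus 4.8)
The plan is to realize $(x^1_j)_{j=1}^\infty$ as the image of the basis $(x^0_j)_{j=1}^\infty$ under a bounded invertible operator, exploiting the standard fact that the image of a Schauder basis under an isomorphism is an equivalent Schauder basis. To this end I would introduce the perturbation operator $U:X\rightarrow X$ given by $U(x)=\sum_{j=1}^\infty x^{0*}_j(x)(x^0_j-x^1_j)$ and set $T:=I-U$. Using the biorthogonality $x^{0*}_j(x^0_k)=\delta_{j,k}$, one computes $U(x^0_k)=x^0_k-x^1_k$, so that $T(x)=\sum_{j=1}^\infty x^{0*}_j(x)x^1_j$ and in particular $T(x^0_k)=x^1_k$ for every $k\in\N$.

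The first step is to check that $U$ is well defined and bounded with norm strictly less than $1$. For any $x\in X$ the defining series converges absolutely, since
$$\sum_{j=1}^\infty \big\|x^{0*}_j(x)(x^0_j-x^1_j)\big\|\leq \|x\|\sum_{j=1}^\infty \|x^{0*}_j\|\,\|x^0_j-x^1_j\|=\lambda\|x\|,$$
where $\lambda:=\sum_{j=1}^\infty \|x^0_j-x^1_j\|\,\|x^{0*}_j\|<1$ by hypothesis; this yields $\|U\|\leq\lambda<1$. The second step is then immediate: because $\|U\|\leq\lambda<1$, the operator $T=I-U$ is invertible via the Neumann series $T^{-1}=\sum_{n=0}^\infty U^n$, with $\|T^{-1}\|\leq(1-\lambda)^{-1}$ and $\|T\|\leq 1+\lambda$.

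The final step is to transfer the basis property through the isomorphism $T$. Given $x\in X$, expand $T^{-1}x=\sum_j a_j x^0_j$ uniquely using the basis $(x^0_j)_{j=1}^\infty$; applying $T$ gives $x=\sum_j a_j x^1_j$, and uniqueness of this representation follows from the injectivity of $T$ together with the uniqueness of expansions in $(x^0_j)_{j=1}^\infty$. Hence $(x^1_j)_{j=1}^\infty$ is a Schauder basis. Equivalence is read off directly from the two-sided estimate $\|\sum_j a_j x^1_j\|=\|T(\sum_j a_j x^0_j)\|\leq\|T\|\,\|\sum_j a_j x^0_j\|$ and $\|\sum_j a_j x^0_j\|=\|T^{-1}(\sum_j a_j x^1_j)\|\leq\|T^{-1}\|\,\|\sum_j a_j x^1_j\|$, which furnishes the constants $c=\|T^{-1}\|^{-1}$ and $C=\|T\|$. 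There is no serious obstacle in this argument; the only point demanding care is the absolute-convergence estimate of the first step, which relies on the pointwise bound $|x^{0*}_j(x)|\leq\|x^{0*}_j\|\,\|x\|$ and on the summability hypothesis to guarantee simultaneously that $U$ is a genuine bounded operator and that $\|U\|<1$, the latter being precisely what makes the Neumann series inversion valid.
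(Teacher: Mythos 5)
Your proof is correct. The paper itself gives no proof of this statement: it quotes it as the classical ``small perturbation lemma'' and cites Theorem 6.18 of \cite{FHHMPZ}. Your argument --- defining $U(x)=\sum_j x^{0*}_j(x)(x^0_j-x^1_j)$, bounding $\|U\|\leq\lambda<1$ by absolute convergence, inverting $T=I-U$ by the Neumann series, and transporting the basis and the equivalence constants through the isomorphism $T$ --- is precisely the standard proof of that cited result, so there is nothing to reconcile between your route and the paper's.
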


Note that if $(x^1_j)_{j=1}^\infty$ is a perturbation of $(x^0_j)_{j=1}^\infty$ in the sense of Theorem \ref{T:basisPert}, then every weaving is also a perturbation of $(x^0_j)_{j=1}^\infty$ and hence $(x^1_j)_{j=1}^\infty$ and $(x^0_j)_{j=1}^\infty$ are woven.  In other words, a perturbation of a Schauder basis is a Schauder basis which is woven with it.  Perturbation theorems have been considered for Hilbert space frames and Schauder frames as well \cite{BCGLL, CLZ}.   However, we note that perturbation theorems are particularly natural in the context of approximate Schauder frames because when a Schauder frame is perturbed, one might expect the frame operator to be perturbed as well.  Hence, the perturbation of a Schauder frame would be expected to be an approximate Schauder frame, and indeed the name ``approximate Schauder frame" was chosen for this very reason.  We prove two perturbation theorems for approximate Schauder frames and prove they result in woven approximate Schauder frames.

\begin{thm}\label{T:Pert1}
Suppose $(x_i,f_i)_{i = 1}^\infty$ is a $C$-suppression unconditional approximate Schauder frame for $X$ with frame operator $S$. If $T$ is a bounded operator satisfying
\[
\|Id - T\| < C^{-1},
\]
then $(Tx_i, f_i)_{i = 1}^\infty$ is an unconditional approximate
Schauder frame for $X$ and is woven with $(x_i,f_i)_{i = 1}^\infty$.
\end{thm}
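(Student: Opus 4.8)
The plan is to exploit that $T$ is a single fixed bounded operator, so it pulls out of any convergent series of the form $\sum_i f_i(x)x_i$. First I would record that $\|Id-T\|<C^{-1}\le 1$ forces $T$ to be invertible, with $T^{-1}=\sum_{k=0}^\infty(Id-T)^k$ by the Neumann series. Since $\sum_i f_i(x)Tx_i=T\sum_i f_i(x)x_i=TSx$, the frame operator of $(Tx_i,f_i)_{i=1}^\infty$ is exactly $TS$, a composition of two invertible operators; hence $(Tx_i,f_i)_{i=1}^\infty$ is an approximate Schauder frame. (The series $\sum_i f_i(x)Tx_i$ converges because the original frame, being $C$-suppression unconditional, is unconditional, so applying the continuous map $T$ to the convergent series $Sx$ is legitimate.) Unconditionality of $(Tx_i,f_i)_{i=1}^\infty$ then follows from the suppression estimate: for $\Gamma\subseteq\N$ and $x\in X$, $\|\sum_{i\in\Gamma}f_i(x)Tx_i\|=\|T\sum_{i\in\Gamma}f_i(x)x_i\|\le C\|T\|\,\|Sx\|\le C\|T\|\,\|T^{-1}\|\,\|TSx\|$, so $(Tx_i,f_i)_{i=1}^\infty$ is $C\|T\|\,\|T^{-1}\|$-suppression unconditional.

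For the weaving statement, fix $\sigma\in\{0,1\}^\N$ and write $\Gamma=\sigma^{-1}(1)$ for the coordinates on which the weaving uses $Tx_i$. Setting $R_\Gamma x:=\sum_{i\in\Gamma}f_i(x)x_i$ (a well-defined operator by unconditionality), the frame operator of the weaving is
\[
P_{\sigma,\N}x=\sum_{i\notin\Gamma}f_i(x)x_i+\sum_{i\in\Gamma}f_i(x)Tx_i=Sx+(T-Id)R_\Gamma x.
\]
The key algebraic step is to factor this as $P_{\sigma,\N}=\big(Id+(T-Id)R_\Gamma S^{-1}\big)S$, reducing invertibility of $P_{\sigma,\N}$ to invertibility of the bracketed perturbation of the identity.

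The crux, and the step I expect to be the main obstacle, is obtaining a bound on $\|R_\Gamma S^{-1}\|$ that is uniform over all $\sigma$ (equivalently, over all $\Gamma$), since only then does the Neumann series close simultaneously for every weaving. This is exactly where $C$-suppression unconditionality is used: for any $y\in X$, $\|R_\Gamma S^{-1}y\|=\|\sum_{i\in\Gamma}f_i(S^{-1}y)x_i\|\le C\|S(S^{-1}y)\|=C\|y\|$, so $\|R_\Gamma S^{-1}\|\le C$ independently of $\Gamma$. Consequently $\|(T-Id)R_\Gamma S^{-1}\|\le C\|Id-T\|<1$, so $Id+(T-Id)R_\Gamma S^{-1}$ is invertible with inverse of norm at most $(1-C\|Id-T\|)^{-1}$, and $P_{\sigma,\N}=\big(Id+(T-Id)R_\Gamma S^{-1}\big)S$ is invertible. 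This yields the uniform bounds $\|P_{\sigma,\N}\|\le(1+C\|Id-T\|)\|S\|$ and $\|P_{\sigma,\N}^{-1}\|\le(1-C\|Id-T\|)^{-1}\|S^{-1}\|$, valid for every $\sigma$; hence every weaving is a uniform-constant approximate Schauder frame and the two frames are woven. Alternatively, one may simply verify that each weaving is an approximate Schauder frame and invoke Theorem \ref{T:weaklyWeaving} to recover the uniform constant.
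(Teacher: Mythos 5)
Your proposal is correct and is essentially the paper's proof in different notation: your bracketed operator $Id+(T-Id)R_\Gamma S^{-1}$ is exactly $S_\sigma S^{-1}$ (where $S_\sigma$ is the weaving's frame operator), so your key estimate $\|(T-Id)R_\Gamma S^{-1}\|\leq C\|Id-T\|<1$ via $C$-suppression unconditionality is precisely the paper's bound $\|Id-S_\sigma S^{-1}\|\leq C\|Id-T\|<1$, followed by the same Neumann-series conclusion. The only cosmetic differences are that you treat $(Tx_i,f_i)_{i=1}^\infty$ separately through the factorization $TS$ and record explicit uniform bounds on $\|P_{\sigma,\N}\|$ and $\|P_{\sigma,\N}^{-1}\|$, while the paper gets both from the single uniform-in-$\sigma$ estimate.
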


\begin{proof}
Set $(x_i^0, f_i^0)_{i=1}^\infty = (x_i, f_i)_{i=1}^\infty$ and $(x_i^1, f_i^1)_{i=1}^\infty = (Tx_i, f_i)_{i=1}^\infty$
for ease of notation. Let $x\in X$ and $\sigma\in\{0,1\}^\N$.  We
will first prove that the series $ \sum_{ i = 1}^\infty
f_i^{\sigma(i)} (x) x_i^{\sigma(i)} $ is unconditionally Cauchy and hence the frame
operator of $(x_i^{\sigma(i)}, f_i^{\sigma(i)})_{i = 1}^\infty$ is
well defined. Let $\vp>0$ and choose  $N \in \mathbb{N}$ so that for
all $J\subseteq \N$ with $\min(J) \geq N$, we have that $(1+\|T\|)\|\sum_{i\in
J}
f_i(x) x_i\|<\vp$.  For a fixed $J\subseteq \N$ with $\min(J) \geq N$, we have that
\begin{align*}
\bigg\|\sum_{i\in J} f^{\sigma(i)}_i(x) x^{\sigma(i)}_i\bigg\|&=\bigg\|\sum_{i\in
J\cap\sigma^{-1}(0)} f_i(x) x_i+\sum_{i\in J\cap\sigma^{-1}(1)}
f_i(x) T(x_i)\bigg\|\\
&\leq \bigg\|\sum_{i\in J\cap\sigma^{-1}(0)} f_i(x) x_i\bigg\|+\bigg\|\sum_{i\in
J\cap\sigma^{-1}(1)}
f_i(x) x_i\bigg\|\|T\|\\
&<\vp.
\end{align*}
Hence, the frame operator of $(x_i^{\sigma(i)}, f_i^{\sigma(i)})_{i
= 1}^\infty$ is well defined and converges unconditionally.

Let $\sigma \in \{0,1\}^\mathbb{N}$
and let $S_\sigma$ be the frame operator of
$(x^{\sigma(i)}_i,f_i^{\sigma(i)})_{i=1}^\infty$.  We will prove
that $\|Id-S_\sigma S^{-1}\|<1$, hence showing that $S_\sigma$ is
bounded with bounded inverse. If $x\in X$ and $y = S^{-1}x$, then
\begin{align*}
\left\|(Id-S_\sigma S^{-1})x\right\|&=\|Sy-S_\sigma
y\|\\
&=\left\|\sum_{i=1}^\infty f^0_i(y)x^0_i-\sum_{i=1}^\infty
f^{\sigma(i)}_i(y)x^{\sigma(i)}_i\right\|\\
&= \left\|\sum_{ i \in\sigma^{-1}(1)} \big[f_i (y) x_i-f_i (y) T(x_i)\big]\right\|\\
&\leq \left\|\sum_{i \in\sigma^{-1}(1)} f_i(y) x_i\right\| \|Id-T\| \\
&\leq C\|Sy\|\|Id-T\|\\
&=C\|x\|\|Id-T\|.
\end{align*}
Thus, $\|Id-S_\sigma S^{-1}\|\leq C\|Id-T\|<1$, and hence $S_\sigma$
is bounded with bounded inverse.
\end{proof}

Note that Theorem \ref{T:Pert1} is only stated for the case that $(x_i, f_i)_{i=1}^\infty$ is an unconditional approximate Schauder frame.  It is natural to ask if the same conclusion holds for conditional approximate Schauder frames as well.  This is not possible because if $(x_i, f_i)_{i=1}^\infty$ is a conditional approximate Schauder frame and $\alpha$  is any scalar other than $1$ then  $(x_i, f_i)_{i=1}^\infty$ is not woven with $(\alpha x_i, f_i)_{i=1}^\infty$.  However, the following perturbation result holds for both unconditional and conditional approximate Schauder frames.

\begin{thm}
If $(x_i^0, f_i^0)_{i = 1}^\infty$ is an approximate Schauder frame for $X$ with frame operator $S$ and $(x_i^1, f_i^1)_{i = 1}^\infty$ is a sequence in $X \times X^*$ satisfying
\[
\sum_{i = 1}^\infty \left(\|f_i^0 - f_i^1\|\|x_i^0\| + \|x_i^0 - x_i^1\|\|f_i^1\|\right) < \|S^{-1}\|^{-1}
\]
then $(x_i^1, f_i^1)_{i = 1}^\infty$ is an approximate Schauder frame for $X$ that is woven with $(x_i^0, f_i^0)_{i = 1}^\infty$.
\end{thm}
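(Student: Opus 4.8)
The plan is to fix an arbitrary $\sigma\in\{0,1\}^\N$, show that the frame operator $S_\sigma$ of the weaving $(x_i^{\sigma(i)},f_i^{\sigma(i)})_{i=1}^\infty$ is a well-defined bounded operator, and then prove that $S_\sigma$ is a small perturbation of $S$ in operator norm, with a bound independent of $\sigma$. Invertibility of $S_\sigma$ will follow from the standard Neumann series argument for perturbations of an invertible operator, and the uniformity of the bounds will give that the two frames are woven.

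First I would set $\lambda:=\sum_{i=1}^\infty\big(\|f_i^0-f_i^1\|\|x_i^0\|+\|x_i^0-x_i^1\|\|f_i^1\|\big)$, so that $\lambda<\|S^{-1}\|^{-1}$ by hypothesis. The key algebraic identity is
\[
f_i^0(x)x_i^0-f_i^1(x)x_i^1=(f_i^0-f_i^1)(x)\,x_i^0+f_i^1(x)\,(x_i^0-x_i^1),
\]
which yields $\|f_i^0(x)x_i^0-f_i^1(x)x_i^1\|\leq\big(\|f_i^0-f_i^1\|\|x_i^0\|+\|x_i^0-x_i^1\|\|f_i^1\|\big)\|x\|$. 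Since the right-hand side is summable in $i$, the difference series $\sum_i\big[f_i^0(x)x_i^0-f_i^{\sigma(i)}(x)x_i^{\sigma(i)}\big]$ (whose $i$-th term vanishes when $\sigma(i)=0$) converges absolutely. Writing $\sum_{i=1}^n f_i^{\sigma(i)}(x)x_i^{\sigma(i)}=\sum_{i=1}^n f_i^0(x)x_i^0+\sum_{i=1}^n\big[f_i^{\sigma(i)}(x)x_i^{\sigma(i)}-f_i^0(x)x_i^0\big]$ and letting $n\to\infty$, the first sum converges to $Sx$ because $(x_i^0,f_i^0)_{i=1}^\infty$ is an approximate Schauder frame, while the second converges by absolute convergence; hence $S_\sigma x$ is well defined. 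This comparison with $S$ is what lets us sidestep any conditionality of the original frame, and it is the main point of the argument.

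Having established convergence, I would estimate $\|(S-S_\sigma)x\|=\big\|\sum_{i\in\sigma^{-1}(1)}[f_i^0(x)x_i^0-f_i^1(x)x_i^1]\big\|\leq\lambda\|x\|$ by summing the per-term bound above over $i\in\sigma^{-1}(1)$. This gives $\|S-S_\sigma\|\leq\lambda$, a bound independent of $\sigma$, and in particular $S_\sigma$ is bounded with $\|S_\sigma\|\leq\|S\|+\lambda$. Since $S_\sigma=S\big(I-S^{-1}(S-S_\sigma)\big)$ and $\|S^{-1}(S-S_\sigma)\|\leq\|S^{-1}\|\lambda<1$, the operator $I-S^{-1}(S-S_\sigma)$ is invertible by the Neumann series, so $S_\sigma$ is invertible with $\|S_\sigma^{-1}\|\leq\|S^{-1}\|/(1-\|S^{-1}\|\lambda)$. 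Taking $\sigma\equiv1$ shows that $(x_i^1,f_i^1)_{i=1}^\infty$ is itself an approximate Schauder frame, and since the bounds on $\|S_\sigma\|$ and $\|S_\sigma^{-1}\|$ are uniform in $\sigma$, every weaving is a $C$-approximate Schauder frame for $C:=\max\{\|S\|+\lambda,\ \|S^{-1}\|/(1-\|S^{-1}\|\lambda)\}$; thus the two frames are $C$-woven. Alternatively, once every weaving is seen to be an approximate Schauder frame, Theorem \ref{T:weaklyWeaving} immediately yields wovenness. The only delicate step is the convergence of $S_\sigma x$ in the possibly conditional setting; everything else is the routine perturbation-of-an-invertible-operator computation carried out uniformly in $\sigma$.
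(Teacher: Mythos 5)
Your proposal is correct and follows essentially the same route as the paper: both establish convergence of $S_\sigma x$ by comparing the weaving with $S$ via the splitting $f_i^0(x)x_i^0-f_i^1(x)x_i^1=(f_i^0-f_i^1)(x)x_i^0+f_i^1(x)(x_i^0-x_i^1)$ and the absolute summability of these terms, and then both conclude invertibility by a Neumann-series perturbation argument (the paper bounds $\|Id-S_\sigma S^{-1}\|$, you bound $\|S^{-1}(S-S_\sigma)\|$, which is the same estimate factored on the other side). Your explicit remarks that $\sigma\equiv 1$ recovers $(x_i^1,f_i^1)_{i=1}^\infty$ itself and that the bounds are uniform in $\sigma$ are left implicit in the paper but are immediate.
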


\begin{proof}

Let $\sigma\in\{0,1\}^\N$ and $x\in X$.  We first show that the
series $ \sum_{ i = 1}^\infty f_i^{\sigma(i)} (x) x_i^{\sigma(i)} $
is Cauchy and hence the frame operator of $(x_i^{\sigma(i)},
f_i^{\sigma(i)})_{i = 1}^\infty$ is well defined. Let $\vp>0$ and
choose  $N \in \mathbb{N}$ so that for all $m
> n \geq N$,
\[
\left\|\sum_{i = n}^m f_i^0(x)x_i^0 \right\| + \sum_{i = n}^m
(\|f_i^0 - f_i^1\|\|x_i^0\| + \|x_i^0 - x_i^1\|\|f_i^1\|)\|x\| <
\epsilon.
\]
Thus,
\begin{align*}
\left\| \sum_{i = n}^m f_i^{\sigma(i)} (x) x_i^{\sigma(i)} \right\| &\leq \left\| \sum_{i = n}^m f_i^0(x) x_i^0 \right\| + \left\|\sum_{i \in [n,m] \cap \sigma^{-1}(1)} \big[f_i^0(x) x_i^0 - f_i^1(x) x_i^1\big] \right\|\\
&\leq \left\| \sum_{i = n}^m f_i^0(x) x_i^0 \right\| +\sum_{i=n}^m
\big[\left\|f_i^0(y) x_i^0 - f_i^1(y) x_i^0
\right\|+\left\|f_i^1(x) x_i^0 - f_i^1(x) x_i^1 \right\|\big]\\
&\leq \left\| \sum_{i = n}^m f_i^0(x) x_i^0 \right\| + \sum_{i =
n}^m (\|f_i^0 - f_i^1\|\|x_i^0\| + \|x_i^0 - x_i^1\|\|f_i^1\|)\|x\|\\
&<\epsilon.
\end{align*}
Hence, the frame operator of $(x_i^{\sigma(i)}, f_i^{\sigma(i)})_{i
= 1}^\infty$ is well defined.


Let $\sigma \in \{0,1\}^\mathbb{N}$ and let $T$ be the frame
operator of $(x^{\sigma(i)}_i,f_i^{\sigma(i)})_{i=1}^\infty$.  We
will prove that $\|Id-TS^{-1}\|<1$, hence showing that $T$ is bounded
with bounded inverse.  If $x\in X$ and $y = S^{-1}x$, then
\begin{align*}
\left\|(Id-TS^{-1})x\right\|&=\|Sy-Ty\|\\
&=\left\|\sum_{i=1}^\infty f^0_i(y)x^0_i-\sum_{i=1}^\infty
f^{\sigma(i)}_i(y)x^{\sigma(i)}_i\right\|\\
&= \left\|\sum_{ i \in\sigma^{-1}(1)}\big[ f_i^{0} (y) x_i^{0}-f_i^{1} (y) x_i^{1}\big]\right\|\\
&\leq \sum_{i = 1}^\infty \|f_i^0(y) x_i^0 - f_i^1(y) x_i^1 \|\\
&\leq \sum_{i=1}^\infty \|f_i^0(y) x_i^0 - f_i^1(y) x_i^0
\|+\|f_i^1(y) x_i^0 - f_i^1(y) x_i^1 \|\\
&\leq \left(\sum_{i=1}^\infty \|f_i^0-f^1_i\|\| x_i^0\|+\|x_i^0 - x_i^1 \|\|f_i^1\|\right)\|y\|\\
&\leq \left(\sum_{i=1}^\infty \|f_i^0-f^1_i\|\| x_i^0\|+\|x_i^0 -
x_i^1 \|\|f_i^1\|\right)\|S^{-1}\|\|x\|.
\end{align*}
Thus, $\|Id-TS^{-1}\|\leq\left(\sum_{i=1}^\infty \|f_i^0-f^1_i\|\|
x_i^0\|+\|x_i^0 - x_i^1 \|\|f_i^1\|\right)\|S^{-1}\|<1$ proving that $T$
is bounded with bounded inverse.
\end{proof}

\end{document}